\newcommand{\Z}{\mathbb{Z}}
\newcommand{\C}{\mathbb{C}}
\newcommand{\R}{\mathbb{R}}
\newcommand{\Q}{\mathbb{Q}}
\newtheorem{thm}{Theorem}
\numberwithin{thm}{section}
\newtheorem{cor}[thm]{Corollary}
\newtheorem{lem}[thm]{Lemma}
\newtheorem{conj}[thm]{Conjecture}
\newtheorem{prop}[thm]{Proposition}
\theoremstyle{definition}
\newtheorem{defn}[thm]{Definition}
\newtheorem*{xrem}{Remark} 
\newtheorem*{exstar}{Example}
\newtheorem*{rep@theorem}{\rep@title}
\newcommand{\newreptheorem}[2]{%
\newenvironment{rep#1}[1]{%
 \def\rep@title{#2 \ref{##1}}%
 \begin{rep@theorem}}%
 {\end{rep@theorem}}}
\begin{document}

\title[Hook Lengths Divisible by Two or Three]{Distributions of Hook Lengths Divisible by Two or Three}

\subjclass[2020]{}
\keywords{Partitions, hook lengths, symmetric groups}

\author[H.~Lang]{Hannah Lang}
\address{Department of Mathematics \\
Harvard University \\
1 Oxford Street \\
Cambridge, MA 02138}
\email{\href{mailto:hlang@college.harvard.edu}{hlang@college.harvard.edu}}

\author[H.~Wan]{Hamilton Wan}
\address{Department of Mathematics \\
Yale University\\
10 Hillhouse Avenue\\
New Haven, CT 06520}
\email{\href{mailto:hamilton.wan@yale.edu}{hamilton.wan@yale.edu}}

\author[N.~Xu]{Nancy Xu}
\address{Department of Mathematics \\
Princeton University \\
304 Washington Road \\
Princeton, NJ 08544}
\email{\href{mailto:nancyx@princeton.edu}{nancyx@princeton.edu}}

\maketitle

\begin{abstract}
    For fixed $t = 2$ or $3$, we investigate the statistical properties of $\{\hat{Y}_t(n)\}$, the sequence of random variables corresponding to the number of hook lengths divisible by $t$ among the partitions of $n$. We characterize the support of $\hat{Y}_t(n)$ and show, in accordance with empirical observations, that the support is vanishingly small for large $n$. Moreover, we demonstrate that the nonzero values of the mass functions of $\hat{Y}_2(n)$ and $\hat{Y}_3(n)$ approximate continuous functions. Finally, we prove that although the mass functions fail to converge, the cumulative distribution functions of $\{\hat{Y}_2(n)\}$ and $\{\hat{Y}_3(n)\}$ converge pointwise to shifted Gamma distributions, completing a characterization initiated by Griffin--Ono--Tsai for $t \geq 4$. 
\end{abstract}

\section{Introduction}\label{sec:intro}

We say that a sequence of nonincreasing positive integers $\lambda = (\lambda_1, \hdots, \lambda_m)$ is a \textit{partition} of a positive integer $n$ if $\lambda_1 + \cdots + \lambda_m = n$. The \textit{Young diagram} of the partition $\lambda = (\lambda_1,\ldots,\lambda_m)$ is a left-justified array of square cells with rows of length $\lambda_1, \hdots, \lambda_m$, and the \textit{hook length} of any cell in the Young diagram is the sum of the number of cells to the right of it in its row, the number of cells below it in its column, and one (to account for the cell itself). 

\begin{exstar} Here we give the Young diagram for the partition $\lambda = (6,4,3,1)$ of $n = 14$, where each cell is labelled with its corresponding hook length.
\begin{figure}[H]
    \centering
    \ytableausetup{centertableaux}
    \begin{ytableau}
    9 & 7 & 6 & 4 & 2 & 1 \\
    6 & 4 & 3 & 1 \\
    4 & 2 & 1 \\
    1
    \end{ytableau}
    \label{fig:young_diagram_example}
\end{figure}
\end{exstar}

Partition hook lengths have many applications in combinatorics, number theory, and representation theory. For instance, consider the famous correspondence between complex finite-dimensional irreducible representations of the symmetric group $S_n$ and the Young diagrams of the partitions of $n$. For any prime $p$, the Young diagrams associated with the set of $p$-core partitions, that is, partitions with no hook lengths divisible by $p$, correspond precisely to those irreducible representations that remain irreducible upon reduction modulo $p$ \cite[Chapter 6]{JK84}. Moreover, the number of standard Young tableaux of the partition $\lambda$, a quantity of interest in algebraic combinatorics, is given by the Frame-Robinson-Thrall hook length formula \cite{FRT54}
\[
d_{\lambda} = \frac{n!}{\prod_{h \in \mathcal{H}(\lambda)}h},
\] 
where $\mathcal{H}(\lambda)$ denotes the multiset of hook lengths associated to a partition $\lambda$. The value $d_{\lambda}$ is also the dimension of the irreducible representation of $S_n$ corresponding to $\lambda$.

In addition, hook lengths arise naturally in mathematical physics and the study of modular forms. For instance, Nekrasov--Okounkov \cite{NO06} derived the identity 
\[
\sum_\lambda q^{\lambda} \prod_{h \in \mathcal{H}(\lambda)} \left( 1 -\frac{z}{h^2} \right) = \prod_{n=1}^\infty (1-q^n)^{z-1}
\] in their study of the Seiberg-Witten theory, which expresses the powers of the Euler product in terms of $\mathcal{H}(\lambda)$. Griffin, Ono, and Tsai studied the statistical properties of $\mathcal{H}(\lambda)$ in \cite{GOT22}. In particular, they showed that for fixed $t$, the distribution of the random variables $\{t(n)\}$ corresponding to the number of hooks of length exactly $t$ among partitions of $n$ is asymptotically normal.

Using $\mathcal{H}_t(\lambda)$, the multiset of hook lengths of $\lambda$ divisible by $t$, Han \cite[Theorem 1.3]{Han10} derived the following generalization of the Nekrasov--Okounkov identity:  
\begin{align}\label{eqn:han_generalization} \sum_{\lambda } x^{|\lambda|}\prod_{h \in \mathcal{H}_t(\lambda)} \left(y- \frac{tyz}{h^2}\right) = \prod_{k \geq 1} \frac{(1-x^{tk})^t}{(1-(yx^t)^k)^{t-z}(1-x^k)}. \end{align} 

In view of the importance of $\mathcal{H}_{t}(
\lambda)$, we study the sequence of random variables $\{\hat{Y}_t(n)\}$ for a fixed integer $t \geq 2$, where the values of $\hat{Y}_t(n)$ are the number of hook lengths divisible by $t$ in the partitions of $n$. 
For the sake of convenience, we define the polynomial
\begin{align}
P_t(n,x) := \sum_{\lambda \vdash n} x^{\#\mathcal{H}_t(\lambda)} = \sum_{m=0}^\infty p_t(m,n)x^m
\end{align} for any positive integers $t$ and $n$, where $p_t(m,n)$ denotes the number of partitions of $n$ with $m$ hooks of length divisible by $t$. Recently, Griffin--Ono--Tsai \cite{GOT22} showed for the case $t \geq 4$ that the cumulative distribution functions of $\hat{Y}_t(n)$ converge pointwise to a shifted Gamma distribution with shape $k(t) := (t - 1)/2$ and scale $\theta(t) := \sqrt{2/(t - 1)}$ by estimating the polynomial $P_t(n,x)$ at specific values of $x$. To be precise, they proved that for $t \geq 4$, the random variables $\{\hat{Y}_t(n)\}$ satisfy
\[
\frac{n\pi}{\sqrt{3(t-1)n}} - \frac{t\pi}{\sqrt{3(t-1)n}}\hat{Y}_t(n) \sim X_{k(t), \theta(t)},\]
where $X_{k(t), \theta(t)}$ is a random variable satisfying the Gamma distribution with shape $k(t)$ and scale $\theta(t)$ that we defined earlier. 

Ono asked for a resolution to the analogous question for the cases $t = 2$ and $3$. The methods in \cite{GOT22} do not apply to the cases $t=2$ and $3$ because they rely on the existence of moment generating functions for Gamma distributions with shape parameter $k(t) > 1$. Moreover, Griffin--Ono--Tsai observed that for $t=2$ and $3$, the support of $\hat{Y}_t(n)$ is very sparse. For example, consider the distribution of nonzero coefficients in the polynomial
\begin{align*}
P_2(100, x) = \sum_{\lambda \vdash 100} x^{\#\mathcal{H}_2(\lambda)} &= 752 x^{11} + 8470 x^{17} + 1046705 x^{32}  +3157789 x^{36} \\&\quad+ 31551450 x^{45} + 
 51124970 x^{47} + 103679156 x^{50},
\end{align*} also depicted in Figure \ref{fig:t2_sparse}.

Due to the frequency of zeros, the probability mass functions do not appear to converge to a continuous probability density function, which further distinguishes these cases from the case of $t \geq 4$ (see, for instance, Figure \ref{fig:conjecturet11} in Section \ref{sec:conj}).

\begin{figure}[h!]
    \centering
    \includegraphics[scale=0.8]{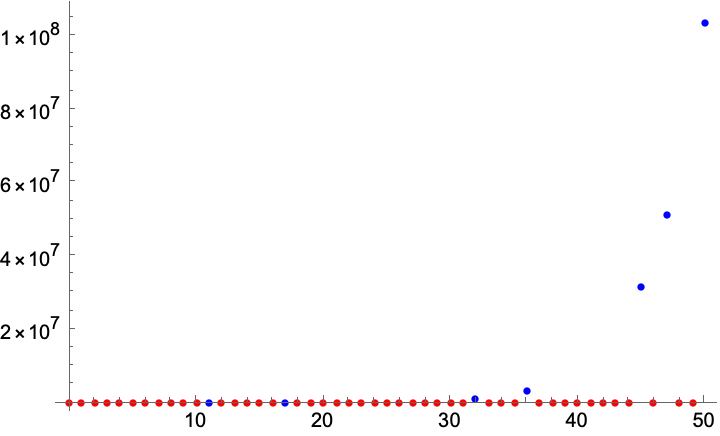}
    \caption{Coefficients of the polynomial $P_2(100,x)$. Observe the sparse support (indicated in blue) and the frequency of zeros (indicated in red).}
    \label{fig:t2_sparse}
\end{figure}

Motivated by the result for $t \geq 4$, we apply a change of variables and define the probability mass functions $\widetilde{f}_{t;n}(x)$ corresponding to 
\begin{align}\widetilde{Y}_t(n) := \frac{n\pi}{\sqrt{3(t-1)n}} - \frac{t\pi}{\sqrt{3(t-1)n}}\hat{Y}_t(n)\end{align}
for any positive integer $n$. 
Similarly, let $g_t(x)$ denote the probability density function for the random variable $X_{k(t), \theta(t)}$. Our first goal is to compare the behavior of the \textit{scaled} mass functions (to account for the change of variables) \begin{align}f_{t;n}(x) := \frac{\sqrt{3(t-1)n}}{t\pi} \; \widetilde{f}_{t;n}(x)\end{align} with $g_t(x)$ in the cases $t = 2$ and $3$. Moreover, for the sake of brevity, we define for each positive integer $n$ \[\mathcal{S}_{t;n} := \left\{x \in \R \ \bigg|\ \frac{n}{t} - \frac{\sqrt{3(t-1)n}}{t\pi}x \in \{0, 1,\ldots,\lfloor n/t\rfloor\}\right\},\] the set of $x \in \R$ where the function $f_{t;n}$ is defined. 

We begin by providing an explicit description of the supports of $\hat{Y}_t(n)$ and show that almost all of the coefficients of $P_t(n,x)$ are zero for $t = 2$ and $3$. To do so, we derive a formula for the coefficients $p_t(m,n)$ related to the number of $t$-colored partitions of $m$. In the case $t = 2$, we relate the vanishing of these coefficients to the count of triangular numbers at most $n/2$, and in the case $t = 3$, we obtain our results by studying the multiplicative properties of the Fourier coefficients of a cusp form with complex multiplication. Using our explicit description, we show that, in some sense, the sparse support of the mass functions $f_{t;n}$ implies that they cannot converge to $g_t(x)$ for $t = 2,3$. 

\begin{thm}\label{intro_thm:support_t2}
If $t=2$, then the following are true:
\begin{itemize}
    \item[(a)] We have $p_2(m,n) \neq 0$ if and only if $n - 2m$ is a triangular number.
    \item[(b)] As $n \to \infty$, \[\frac{\#\{0 \leq m \leq \deg P_2(n,x)\ |\ p_2(m,n) \neq 0\}}{\deg P_{2}(n,x)} = O(n^{-1/2}).\]
    \item[(c)] If there exists $n_0$ such that $x \in \mathcal{S}_{2;n_0}$, then there exists an infinite sequence $\{n_j\}_j$ such that $x \in \mathcal{S}_{2;n_j}$ for all $j$, and the sequence $f_{2,n_j}(x)$ does not converge to $g_2(x)$ as $j \to \infty$. 
\end{itemize}
\end{thm}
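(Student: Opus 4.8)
The plan is to reduce everything to a product formula for $p_2(m,n)$ coming from the $2$-core/$2$-quotient bijection, and then to read off parts (a)--(c) from the arithmetic of triangular numbers together with (for one edge case) standard asymptotics for colored partitions.

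\emph{Parts (a) and (b).} First I would invoke the classical bijection sending a partition $\lambda\vdash n$ to the pair consisting of its $t$-core and its $t$-quotient; under it $\#\mathcal H_t(\lambda)$ equals the total number of boxes of the $t$-quotient, and $|\operatorname{core}_t(\lambda)|=n-t\,\#\mathcal H_t(\lambda)$. This yields the factorization
\[
p_t(m,n)=c_t(n-tm)\cdot p_{-t}(m),
\]
where $c_t(r)$ counts $t$-cores of $r$ and $p_{-t}(m)=[q^m]\prod_{k\ge 1}(1-q^k)^{-t}$ counts $t$-colored partitions of $m$. Since $p_{-t}(m)\ge 1$ for all $m\ge 0$, the vanishing of $p_t(m,n)$ is governed solely by $c_t$. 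For $t=2$ the $2$-cores are exactly the staircases $(r-1,r-2,\dots,1)$, of sizes $\binom r2$, so $c_2(r)\ne 0$ iff $r$ is a triangular number; this gives (a). For (b) the same factorization gives $\deg P_2(n,x)=\lfloor n/2\rfloor$ (take $n-2m\in\{0,1\}$), while the number of nonzero coefficients equals the number of triangular numbers $\le n$ congruent to $n$ modulo $2$, which is $O(\sqrt n)$; dividing by $\lfloor n/2\rfloor$ gives the claimed $O(n^{-1/2})$.

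\emph{Part (c).} I would first rewrite the hypothesis $x\in\mathcal S_{2;n_0}$ as $\tfrac{n_0}2-\tfrac{\sqrt{3n_0}}{2\pi}x=m_0$ with $m_0\in\{0,\dots,\lfloor n_0/2\rfloor\}$ and set $\ell_0:=n_0-2m_0\ge 0$, so that $x=\pi\ell_0/\sqrt{3n_0}$. Then I would take $n_j:=n_0 j^2$: a direct substitution shows $\tfrac{n_j}2-\tfrac{\sqrt{3n_j}}{2\pi}x=\tfrac12 j(n_0j-\ell_0)$, which lies in $\{0,\dots,\lfloor n_j/2\rfloor\}$ (nonnegativity from $\ell_0\le n_0$, integrality from $n_0\equiv\ell_0\bmod 2$), so $x\in\mathcal S_{2;n_j}$ for every $j$, with associated value
\[
f_{2;n_j}(x)=\frac{j\sqrt{3n_0}}{2\pi}\cdot\frac{c_2(\ell_0 j)\,p_{-2}(m_j)}{p(n_j)},\qquad m_j=\tfrac12 j(n_0j-\ell_0),
\]
by the factorization from (a). If $\ell_0\ge 1$ (equivalently $x>0$), then since there are only $O(\sqrt N)$ triangular numbers below $N$, the multiples $\ell_0 j$ fail to be triangular for infinitely many $j$, and for each such $j$ we get $c_2(\ell_0 j)=0$, hence $f_{2;n_j}(x)=0$; since the Gamma density $g_2$ is strictly positive at every $x>0$, the sequence $\{f_{2;n_j}(x)\}_j$ has a subsequence identically zero and so cannot converge to $g_2(x)$. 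If instead $\ell_0=0$, i.e.\ $x=0$ (which forces $n_0$ even), then $c_2(0)=1$ and $f_{2;n_j}(0)=\tfrac{j\sqrt{3n_0}}{2\pi}\,p_{-2}(n_0 j^2/2)/p(n_0 j^2)$; feeding in the Hardy--Ramanujan estimate $p(n)\sim\tfrac1{4n\sqrt3}e^{\pi\sqrt{2n/3}}$ together with the matching asymptotic $p_{-2}(n)\asymp n^{-5/4}e^{2\pi\sqrt{n/3}}$ for $2$-colored partitions, the exponential factors cancel and $f_{2;n_j}(0)$ grows like a constant times $j^{1/2}$; in particular it diverges and does not converge to $g_2(0)$.

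The $t$-core/$t$-quotient factorization and the triangular-number count are routine, so the only place I expect real work is the $x=0$ case of (c): one must control the polynomial order of $p_{-2}(n)$ precisely enough — a two-sided estimate of order $n^{-5/4}$, obtainable either by quoting Meinardus-type asymptotics or by an elementary circle-method lower bound that sums the $\asymp n^{3/4}$ dominant terms of $\sum_{a+b=n}p(a)p(b)$ — in order to conclude that $f_{2;n}(0)$ genuinely diverges rather than settling at a finite limit. Everything else in the theorem is a consequence of the sparsity already recorded in (a)--(b).
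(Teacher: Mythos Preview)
Your proposal is correct and follows essentially the same route as the paper. For (a) the paper derives the factorization $p_2(m,n)=a(m)\cdot[\,n-2m\text{ triangular}\,]$ via the generating function \eqref{generating_function} and Jacobi's identity, whereas you obtain the identical factorization through the $2$-core/$2$-quotient bijection; these are the combinatorial and $q$-series faces of the same fact. Parts (b) and (c) then proceed identically in spirit: count triangular numbers for (b), and for (c) take $n_j=j^2n_0$ and observe that the resulting ``residue'' $\ell_0 j$ (the paper writes $2qj$) is non-triangular for infinitely many $j$. Your treatment of (c) is in fact slightly more careful than the paper's: you avoid the paper's simplifying assumptions ($n_0$ even, $3n_0$ squarefree) by checking integrality of $m_j$ directly, and you address the boundary case $x=0$ separately via Meinardus/Hardy--Ramanujan asymptotics, whereas the paper's argument only explicitly covers $x>0$.
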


\begin{thm}\label{intro_thm:support_t3}
If $t=3$, then the following are true:
\begin{itemize}
    \item[(a)] We have $p_3(m,n) \neq 0$ if and only if $\operatorname{ord}_r(3(n-3m)+1) \equiv 0 \pmod{2}$ for every prime $r \equiv 2 \pmod{3}$.
    \item[(b)] As $n \to \infty$, \[\frac{\#\{0 \leq m \leq \deg P_3(n,x)\ |\ p_3(m,n) \neq 0\}}{\deg P_{3}(n,x)} = O\left(\frac{1}{\sqrt{\log(n)}}\right).\]
    \item[(c)] If there exists $n_0$ such that $x \in \mathcal{S}_{3;n_0}$, then there exists an infinite sequence $\{n_j\}_j$ such that $x \in \mathcal{S}_{3; n_j}$ for all $j$, and the sequence $f_{3,n_j}(x)$ does not converge to $g_3(x)$ as $j \to \infty$. 
\end{itemize}
\end{thm}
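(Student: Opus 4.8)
The plan is to reduce all three parts to the arithmetic of the function $c_3(N)$ counting $3$-core partitions of $N$. By the formula for $p_t(m,n)$ established above via the Littlewood decomposition, a partition $\lambda\vdash n$ splits uniquely into a $3$-core and a $3$-quotient with $\#\mathcal{H}_3(\lambda)$ equal to the size of the quotient, so $p_3(m,n)=c_3(n-3m)\,p_{(3)}(m)$, where $p_{(3)}(m)$ is the number of $3$-colored partitions of $m$, i.e.\ the coefficient of $q^m$ in $\prod_{k\ge1}(1-q^k)^{-3}$. Since $p_{(3)}(m)\ge1$ for all $m\ge0$, we have $p_3(m,n)\ne0$ if and only if $c_3(n-3m)\ne0$. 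I would then invoke the classical eta-quotient identity $\sum_{N\ge0}c_3(N)q^N=\prod_{k\ge1}(1-q^{3k})^3/(1-q^k)$ together with the complex multiplication of the associated weight-one form by $\mathbb{Q}(\sqrt{-3})$, which yields that $c_3(N)$ equals the number of divisors of $3N+1$ that are $\equiv1\pmod3$ minus the number that are $\equiv2\pmod3$. This arithmetic function is multiplicative and nonnegative, with $p$-factor $\operatorname{ord}_p(3N+1)+1$ when $p\equiv1\pmod3$, equal to $1$ when $p\equiv2\pmod3$ and $\operatorname{ord}_p(3N+1)$ is even (and $3 \nmid 3N+1$ always), and equal to $0$ when $p\equiv2\pmod3$ and $\operatorname{ord}_p(3N+1)$ is odd. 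Part (a) follows at once for $0\le m\le\lfloor n/3\rfloor$, while for $m>n/3$ both sides are trivially false since $\#\mathcal{H}_3(\lambda)\le n/3$.

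For part (b), taking $m=\lfloor n/3\rfloor$ makes $3(n-3m)+1\in\{1,4,7\}$, each of which passes the test in part (a); hence $\deg P_3(n,x)=\lfloor n/3\rfloor$. As $m$ runs over $\{0,1,\dots,\lfloor n/3\rfloor\}$, the integer $M:=3(n-3m)+1$ takes distinct values all $\le3n+1$, so the numerator in (b) is at most the number of \emph{good} integers $M\le3n+1$, where we call $M$ \emph{good} if every prime factor of $M$ that is $\equiv2\pmod3$ occurs to an even power. I would bound the count of good integers below $X$ by $O(X/\sqrt{\log X})$: this is Landau's theorem for the principal binary quadratic form of discriminant $-3$ (equivalently, for norms of ideals in $\mathbb{Z}[\omega]$); only the upper bound is needed, and it also follows from a short Rankin-type argument dominating the indicator of \emph{good} by a suitable multiplicative function. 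Dividing by $\deg P_3(n,x)=\lfloor n/3\rfloor\asymp n$ yields the bound $O(1/\sqrt{\log n})$.

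For part (c), first note the hypothesis forces $x\ge0$: for $x<0$ one has $\frac{n}{3}-\frac{\sqrt{6n}}{3\pi}x>\frac{n}{3}\ge\lfloor n/3\rfloor$ for every $n$, so $x\notin\mathcal{S}_{3;n}$; and since $k(3)=\theta(3)=1$, the limiting density is $g_3(x)=e^{-x}$ on $[0,\infty)$. Let $m_0$ be the integer with $\frac{n_0}{3}-\frac{\sqrt{6n_0}}{3\pi}x=m_0$ and set $N_0:=n_0-3m_0\ge0$, so $x=\pi N_0/\sqrt{6n_0}$. If $N_0=0$ (so $x=0$), then $0\in\mathcal{S}_{3;n}$ exactly when $3\mid n$, where $f_{3;n}(0)=\frac{\sqrt{6n}}{3\pi}\cdot p_{(3)}(n/3)/p(n)$; by the Hardy--Ramanujan asymptotic for $p(n)$ and the Meinardus-type asymptotic for $p_{(3)}$, whose exponential parts cancel when the argument is $n/3$, this converges to a positive constant different from $1=g_3(0)$ (equivalently, this is a special case of the continuous-approximation theorem proved earlier). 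If $N_0\ge1$ (so $x>0$), then for each integer $\ell\ge1$ the pair $n=n_0\ell^2$, $m=\ell(n_0\ell-N_0)/3$ satisfies $\frac{n}{3}-\frac{\sqrt{6n}}{3\pi}x=m$, and whenever $\ell\equiv0\pmod3$ one checks $m\in\mathbb{Z}$ and $0\le m\le\lfloor n/3\rfloor$ (using $n_0\equiv N_0\pmod3$), so $x\in\mathcal{S}_{3;n}$. Now choose a prime $r\equiv2\pmod3$ with $r \nmid 3N_0$; since $3N_0$ is invertible modulo $r^2$ there is a residue $\ell^*$ with $3N_0\ell^*+1\equiv r\pmod{r^2}$, and for every $\ell\equiv\ell^*\pmod{r^2}$ we then get $\operatorname{ord}_r(3N_0\ell+1)=1$, hence $c_3(N_0\ell)=0$ by part (a), i.e., $p_3(m,n)=0$. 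Because $\gcd(3,r^2)=1$, the Chinese Remainder Theorem gives infinitely many $\ell$ with $\ell\equiv0\pmod3$ and $\ell\equiv\ell^*\pmod{r^2}$; listing them $\ell_1<\ell_2<\cdots$ and putting $n_j:=n_0\ell_j^2\to\infty$, we have $x\in\mathcal{S}_{3;n_j}$ and $f_{3,n_j}(x)=\frac{\sqrt{6n_j}}{3\pi}\cdot p_3(m_j,n_j)/p(n_j)=0$, which does not converge to $g_3(x)=e^{-x}>0$.

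The main obstacle is the density estimate behind part (b): one must confirm that Landau's asymptotic — classically stated for sums of two squares — carries over to integers represented by $x^2+xy+y^2$ (equivalently, to \emph{good} integers), so I would either cite the general theorem for norm forms of imaginary quadratic fields of class number one, or supply the self-contained $O(X/\sqrt{\log X})$ upper bound via Rankin's trick, since only an upper bound of that order is needed. Parts (a) and (c) are by comparison softer: (a) is just multiplicativity of the CM coefficient formula, and (c) combines that formula with the Chinese Remainder Theorem and the infinitude of primes $\equiv2\pmod3$ — the one exceptional point $x=0$ being handled instead by the explicit Hardy--Ramanujan and Meinardus-type constants (equivalently, by the earlier result that the nonzero mass values track a continuous function distinct from $g_3$).
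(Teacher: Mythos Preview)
Your proposal is correct and follows essentially the same architecture as the paper: both reduce (a) to the factorization $p_3(m,n)=b(m)\,c(n-3m)$ and the CM description of $c(k)=\sum_{d\mid 3k+1}(d/3)$, and both handle (c) by passing to a quadratic subsequence $n_j=n_0\ell_j^2$ and forcing $c(N_0\ell_j)=0$ along an arithmetic progression of $\ell$'s (the paper merely says ``similar to the $t=2$ case,'' while you spell out the CRT construction and treat $x=0$ separately via the Meinardus constant---both are fine).

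The one genuine difference is in part (b). The paper invokes Serre's theorem on multiplicative sets: the zero set $E=\{m:d^*(m)=0\}$ is multiplicative with prime set of Dirichlet density $1/2$, so Serre gives $\overline{E}(z)=O(z/\sqrt{\log z})$. You instead observe that the nonvanishing condition in (a) says exactly that $3(n-3m)+1$ is a norm from $\mathbb{Z}[\omega]$, and then appeal to Landau's theorem for the principal form of discriminant $-3$ (or a Rankin upper bound). Both routes yield the same $O(1/\sqrt{\log n})$; yours is arguably more elementary and stays closer to the arithmetic of the problem, while the paper's route is a one-line citation once the Hecke-eigenform structure of $\mathcal{D}(q)$ is in hand. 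Either is acceptable.

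One small quibble: your claim that for $m>n/3$ ``both sides are trivially false'' is not literally right for the right-hand side (the parity condition on $\operatorname{ord}_r(3(n-3m)+1)$ can still hold for negative arguments), but the paper tacitly restricts to $0\le m\le\lfloor n/3\rfloor$ as well, so this does not affect the substance of the theorem.
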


Note that, since the points at which the functions $f_{t;n}$ are defined vary with $n$, it is not meaningful to discuss pointwise convergence to $g_t$ in the usual sense. Instead, our results in Theorems \ref{intro_thm:support_t2}(c) and \ref{intro_thm:support_t3}(c) provide an alternative, allowing us to describe the failure of $f_{t;n}$ to converge pointwise to $g_t$ for specific subsequences of $n$ where this notion makes sense. 

\begin{xrem} Theorems \ref{intro_thm:support_t2}(b) and \ref{intro_thm:support_t3}(b) imply that, for both $t = 2$ and $3$, the coefficients of $P_t(n,x)$ are zero on a subset with asymptotic density $1$. Namely, 
\[
\lim_{n \to \infty} \frac{\#\{ 0 \leq m \leq \deg P_t(n,x) \mid p_t(m,n) = 0\}}{\deg P_t(n,x)} = 1
\]
for both $t = 2$ and $3.$
\end{xrem}

\begin{exstar}
 The behavior described in the previous remark is illustrated in Figure \ref{fig:tableofnondiffablepoints}, which contains statistics on the proportion of nonzero coefficients of $P_2(n, x)$ and $P_3(n, x)$ with respect to the degrees of the polynomials up to $n = 5000$. Observe that the proportion of nonzero coefficients decreases faster for $t = 2$ than for $t = 3$, reflecting the bounds $O(n^{-1/2})$ for the former and $O(\log(n)^{-1/2})$ for the latter. Figure \ref{fig:t3_support_sparse} plots $\hat{Y}_3(n)$ for $n = 100, 500, 1000$, and $2000$.
\end{exstar}

\begin{center}
\begin{figure}[h!]
{\tabulinesep=1.2mm
\begin{tabu}{ |c|c|c| } 
 \hline
 $n$ & $\dfrac{\#\{p_2(m,n) \neq 0\}}{\deg P_2(n,x)}$ & $\dfrac{\#\{p_3(m,n) \neq 0\}}{\deg P_3(n,x)}$ \\
 \hline
 \hline
 100 & 0.14000\textcolor{white}{...} & 0.63636... \\
 \hline
 500 & 0.06400\textcolor{white}{...} & 0.50602... \\ 
 \hline
 1000 & 0.04600\textcolor{white}{...} & 0.47147...\\ 
 \hline
 1500 & 0.03600\textcolor{white}{...} & 0.46200\textcolor{white}{...}  \\ 
 \hline
 2000 & 0.03100\textcolor{white}{...} & 0.45496... \\
 \hline
 2500 & 0.02800\textcolor{white}{...} & 0.44658... \\ 
 \hline
 3000 & 0.02600\textcolor{white}{...} & 0.44400\textcolor{white}{...}  \\ 
 \hline
 3500 & 0.02400\textcolor{white}{...} & 0.43825... \\ 
 \hline
 4000 & 0.02250\textcolor{white}{...} & 0.43661... \\
 \hline
 4500 & 0.02088... & 0.43200\textcolor{white}{...} \\
 \hline
 5000 & 0.02000\textcolor{white}{...} & 0.43097... \\
 \hline
\end{tabu}}
    \caption{Table depicting the proportion of nonzero coefficients of $P_t(n,x)$ for $t = 2,3$ across various values of $n$.}
    \label{fig:tableofnondiffablepoints}
\end{figure}
\end{center}

\begin{figure}[h!]
\begin{minipage}{.5\linewidth}
\centering
\subfloat[Plot of $\hat{Y}_3(100)$ \label{subfig:y3100}]{\includegraphics[width=3in]{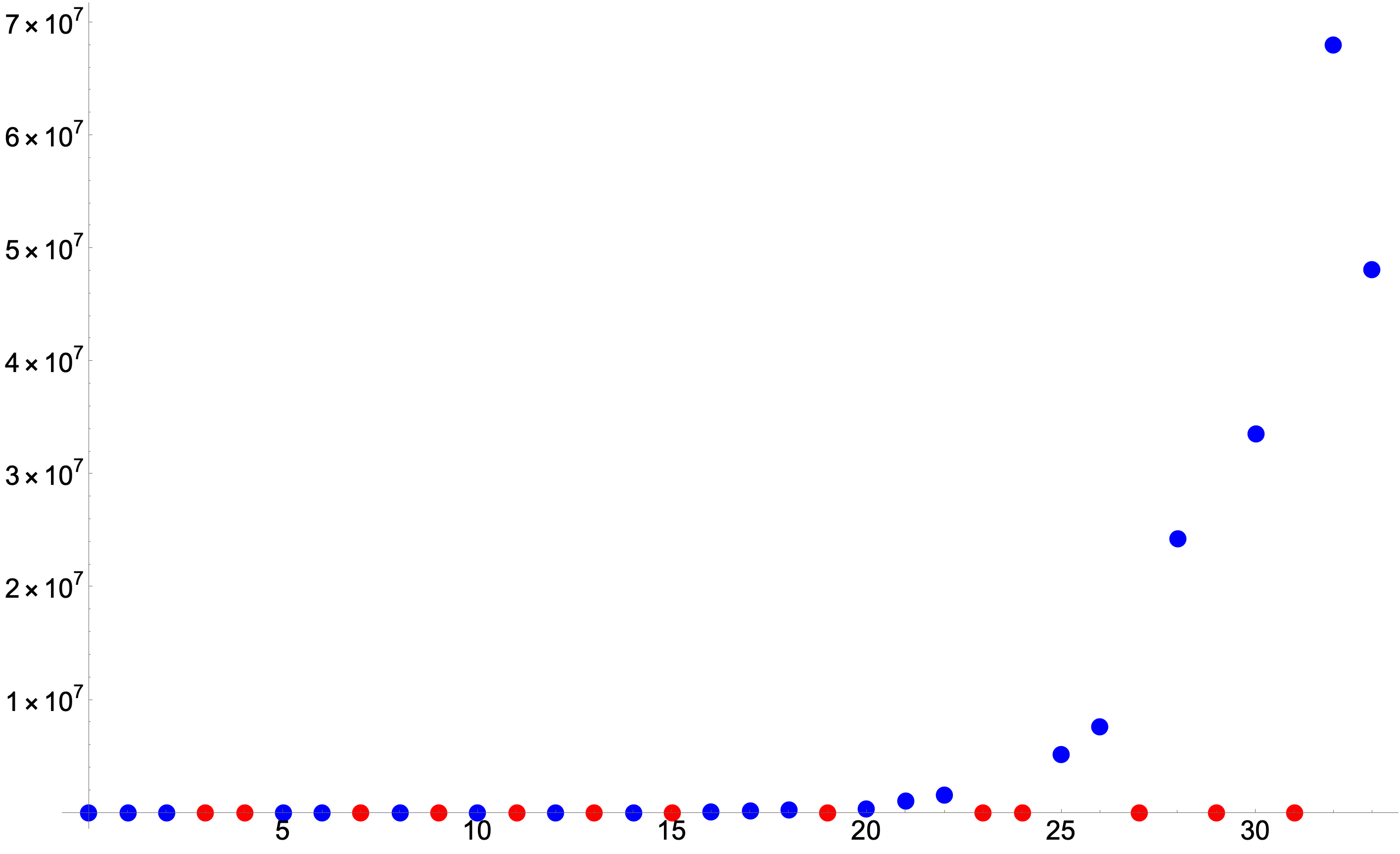}}
\end{minipage}%
\begin{minipage}{.5\linewidth}
\centering
\subfloat[Plot of $\hat{Y}_3(500)$.\label{subfig:y3500}]{\includegraphics[width=3in]{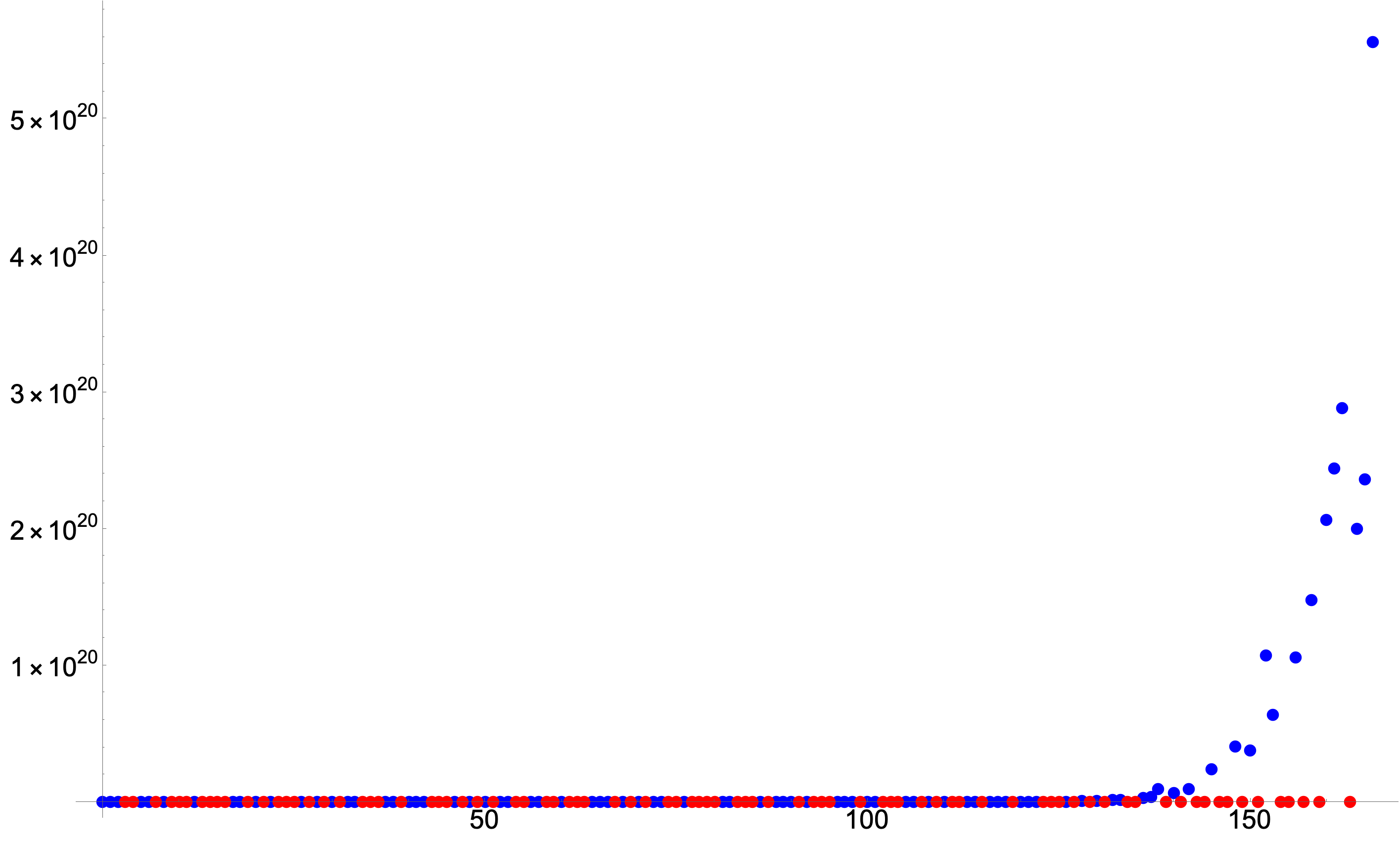}}
\end{minipage}%
\par\medskip
\centering
\begin{minipage}{.5\linewidth}
\centering
\subfloat[Plot of $\hat{Y}_3(1000)$.\label{subfig:y31000}]{\includegraphics[width=3in]{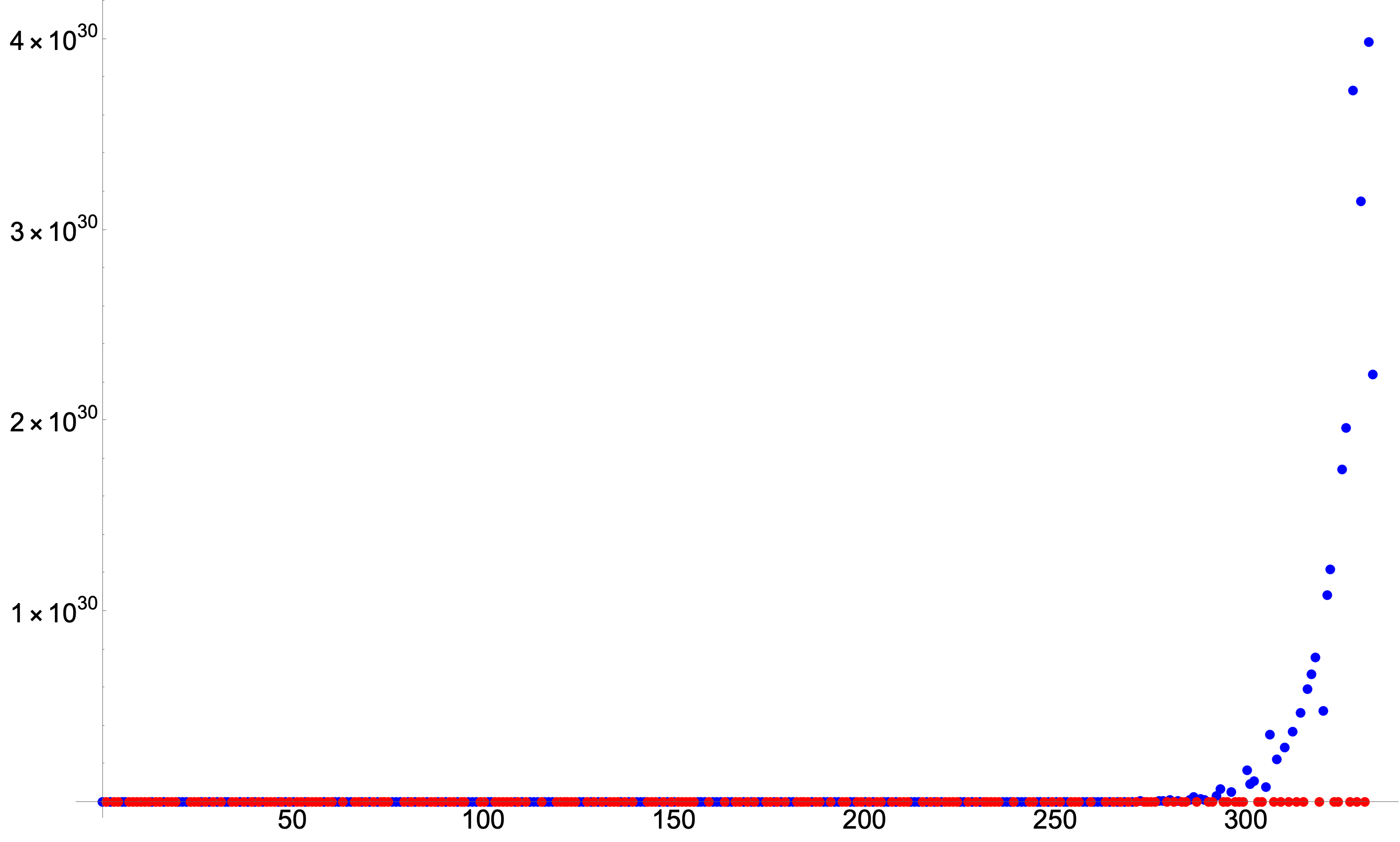}}
\end{minipage}%
\begin{minipage}{.5\linewidth}
\centering
\subfloat[Plot of $\hat{Y}_3(2000)$. \label{subfig:y32000}]{\includegraphics[width=3in]{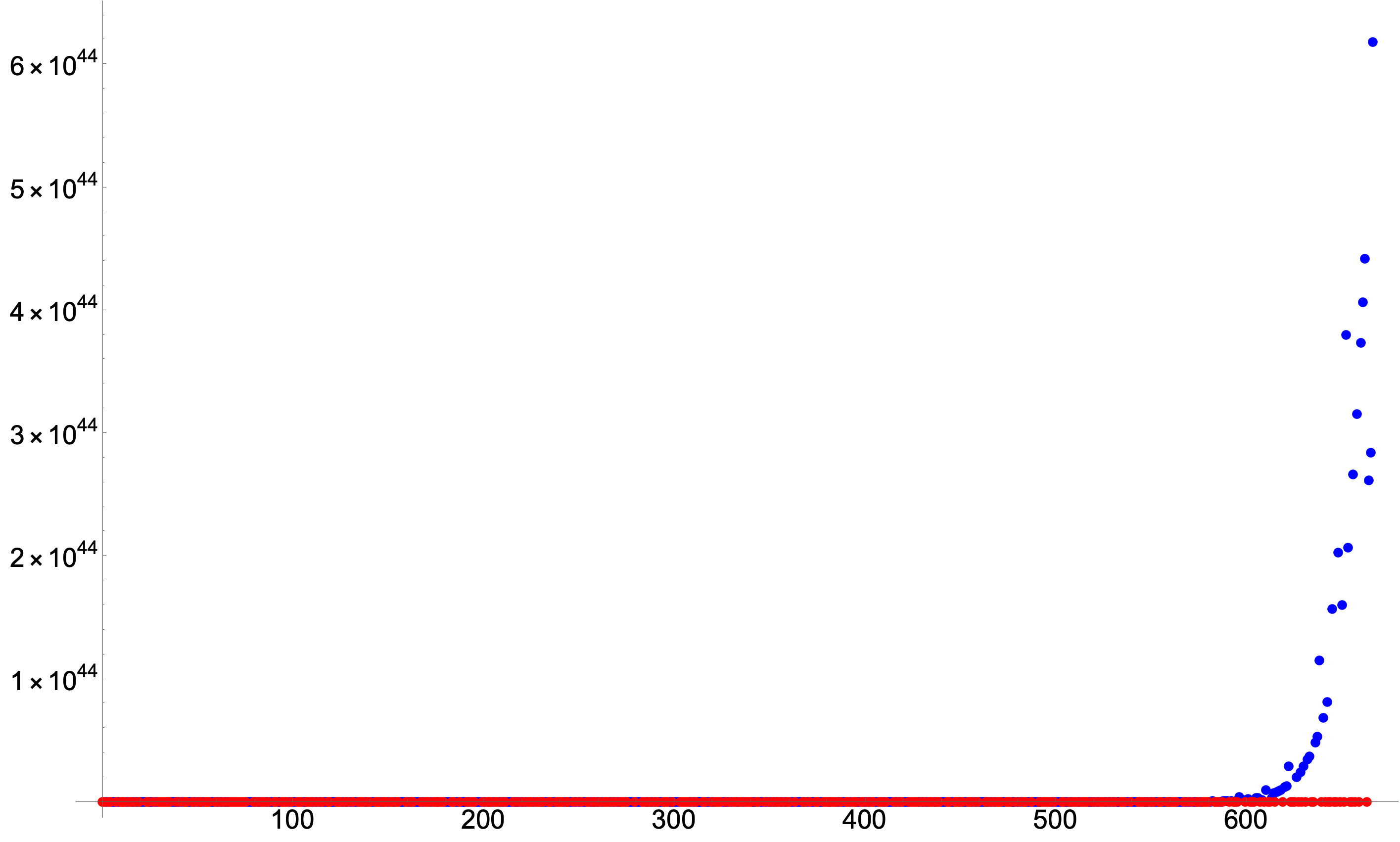}}
\end{minipage}

\caption{Observe that the support of $\hat{Y}_3(n)$ becomes sparser as $n$ grows larger, exemplifying the behavior outlined in Theorem \ref{intro_thm:support_t3}(b).}
\label{fig:t3_support_sparse}
\end{figure}

On the other hand, Figure \ref{fig:support_looks_smooth} suggests that the points in the support of $f_{2;n}(x)$ do in fact lie on a continuous curve, and Figure \ref{fig:t3_support} indicates that the points in the support of $f_{3;n}(x)$ approximate multiple continuous curves. Thus, a natural question to ask is to what extent $f_{t;n}$ converges to $g_t$ when we restrict the domain to the support. 

As we will see in Subsection \ref{sec:even_hl}, for $t=2$ the phenomenon depicted in Figure \ref{fig:support_looks_smooth} originates from the fact that the values of $f_{2;n}(x)$ are related to counts of two-colored partitions. Similarly, we discuss in Subsection \ref{sec:three_hl} how the relation between the values of $f_{3;n}(x)$ with three-colored partitions. We handle the case of $t=2$ in Theorem \ref{intro_thm:pmf_no_converge} and $t=3$ in Theorem \ref{intro_thm:pmf_converge_t3}. In particular, for $t=2$ we explicitly derive a formula for the apparent continuous curve for each $n$, but also show that these functions do not converge pointwise to $g_2(x)$. 

\begin{figure}[h!]
    \centering
    \includegraphics[scale=0.75]{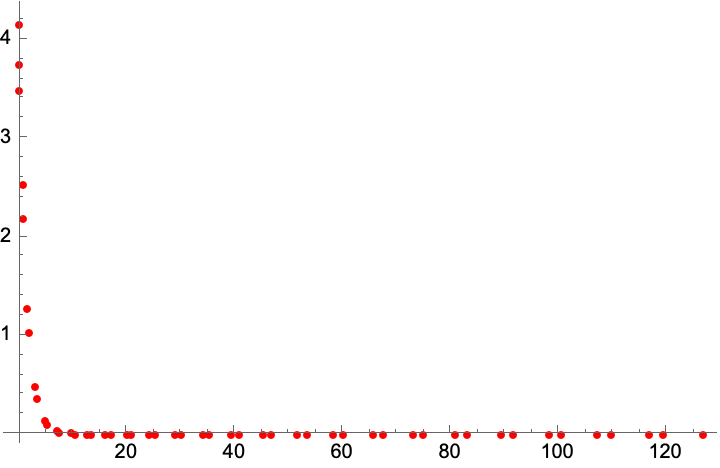}
    \caption{The values of $f_{2;5000}(x)$ in its support appear to approximate a continuous curve.}
    \label{fig:support_looks_smooth}
\end{figure}

\newpage
\begin{thm}\label{intro_thm:pmf_no_converge} 
Assuming the notation above, the following are true:
\begin{itemize}
    \item[(a)] For positive integers $n$, let \[
        h_{2;n}(x) := \frac{3^{1/4}n^{3/2} \exp \left(\pi \sqrt{\frac{2}{3}} \left( \sqrt{n - \frac{\sqrt{3n}}{\pi}x} - \sqrt{n}\right) \right)}{2 \pi\left( \frac{n}{2} - \frac{\sqrt{3n}}{2\pi}x \right)^{5/4}}.
    \] For all $x$ such that $f_{2;n}(x) \neq 0$, we have that $f_{2;n}(x) \sim h_{2;n}(x)(1+\beta_2(x,n))$ as $n \to \infty$, where $\beta_2(x,n)$ is a constant satisfying $\beta_2(x) = O_\varepsilon((n - x\sqrt{n})^{-1/4+\varepsilon})$ for any $\varepsilon > 0$.
    
    \item[(b)] For all $x \in \R$ and for any subsequence $\{n_j\}_{j=1}^\infty$, the sequence $h_{2;n_j}(x)$ does not converge to $g_2(x)$ as $j \to \infty$.
\end{itemize}
\end{thm}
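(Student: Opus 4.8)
The plan is to reduce part~(a) to two classical asymptotic estimates --- for the partition function $p(n)$ and for the number $c_2(m)$ of $2$-colored partitions of $m$ --- and then to obtain part~(b) directly from the closed form that results for $h_{2;n}$.

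\emph{Part (a).} First I would invoke the combinatorial identity underlying Theorem~\ref{intro_thm:support_t2}: via the $2$-core/$2$-quotient correspondence, $\#\mathcal H_2(\lambda)$ equals the total size of the $2$-quotient of $\lambda$, and the $2$-cores of size $j$ are precisely the staircases $(j,j-1,\ldots,1)$ --- one for each triangular $j$ and none otherwise. This yields
\[
p_2(m,n) = \indic{n-2m\ \text{triangular}}\cdot c_2(m),\qquad \sum_{m\ge0}c_2(m)x^m = \prod_{k\ge1}(1-x^k)^{-2}.
\]
Since $\widetilde f_{2;n}$ assigns mass $p_2(m,n)/p(n)$ to the point $x$ with $m=\tfrac n2-\tfrac{\sqrt{3n}}{2\pi}x$, whenever $f_{2;n}(x)\ne0$ I get the exact identity $f_{2;n}(x)=\tfrac{\sqrt{3n}}{2\pi}c_2(m)/p(n)$ with $n-2m$ triangular and $0\le m\le\lfloor n/2\rfloor$. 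Next I would substitute the Hardy--Ramanujan estimate $p(n)=\tfrac{1}{4\sqrt{3}\,n}e^{\pi\sqrt{2n/3}}(1+O(n^{-1/2}))$ together with the Meinardus-type asymptotic $c_2(m)=\tfrac{3^{1/4}}{12}m^{-5/4}e^{2\pi\sqrt{m/3}}(1+O(m^{-\delta}))$ for a suitable $\delta>0$ (one checks that $\tfrac{3^{1/4}}{12}$ is exactly the constant produced by Meinardus' theorem for the Dirichlet series $2\zeta(s)$). Using $n-\tfrac{\sqrt{3n}}{\pi}x=2m$, so that $\sqrt{n-\tfrac{\sqrt{3n}}{\pi}x}=\sqrt{2m}$, $\pi\sqrt{2/3}\sqrt{2m}=2\pi\sqrt{m/3}$, and $\pi\sqrt{2/3}\sqrt n=\pi\sqrt{2n/3}$, all the algebraic prefactors cancel and one is left with $f_{2;n}(x)=h_{2;n}(x)(1+O(m^{-\delta}))/(1+O(n^{-1/2}))$. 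Since $m\asymp n$ for fixed $x$ as $n\to\infty$, this is $h_{2;n}(x)(1+\beta_2(x,n))$, and tracking the two error terms gives the stated bound $\beta_2(x,n)=O_\varepsilon((n-x\sqrt n)^{-1/4+\varepsilon})$. The main obstacle will be establishing the error term in the asymptotic for $c_2(m)$ with sufficient strength and uniformity: a purely Tauberian argument (Ingham's theorem) only yields a factor $1+o(1)$, so one must run a Rademacher-type circle method on $\prod_{k\ge1}(1-x^k)^{-2}$ --- essentially a weakly holomorphic modular form of weight $-1$ --- or cite an effective form of Meinardus' theorem.

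\emph{Part (b).} Substituting $m=\tfrac n2-\tfrac{\sqrt{3n}}{2\pi}x$ rewrites $h_{2;n}$ as $h_{2;n}(x)=\tfrac{3^{1/4}n^{3/2}}{2\pi m^{5/4}}e^{2\pi\sqrt{m/3}-\pi\sqrt{2n/3}}$. For fixed $x$ one has $m=\tfrac n2(1+O(n^{-1/2}))$, so the prefactor is asymptotic to $\tfrac{6^{1/4}}{\pi}n^{1/4}$, and a two-term Taylor expansion of the square root gives $\pi\sqrt{2/3}(\sqrt{2m}-\sqrt n)\to -x/\sqrt2$. Hence
\[
h_{2;n}(x)=\frac{6^{1/4}}{\pi}n^{1/4}e^{-x/\sqrt2}\bigl(1+o(1)\bigr)\ \longrightarrow\ +\infty\qquad(n\to\infty)
\]
for every $x\in\R$. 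Since the Gamma density $g_2$ is finite valued (with the convention $g_2(0)=0$), the divergence to $+\infty$ shows that $h_{2;n_j}(x)$ cannot converge to $g_2(x)$ along any subsequence $\{n_j\}$, which is part~(b).
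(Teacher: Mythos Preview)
Your proposal is correct and follows essentially the same route as the paper: for part~(a) you invoke the identity $p_2(m,n)=c_2(m)$ on the support (the paper obtains this via the generating function factorization in Proposition~\ref{prop:explicit_coefficients_t_2}), then combine Meinardus for $c_2(m)$ with Hardy--Ramanujan for $p(n)$ exactly as in Lemma~\ref{lem:meinardus}; for part~(b) you show $h_{2;n}(x)\to\infty$, which is also what the paper does, though your computation of the precise asymptotic $h_{2;n}(x)\sim \tfrac{6^{1/4}}{\pi}n^{1/4}e^{-x/\sqrt2}$ is sharper than the paper's terser observation that the exponential factor is $\Omega(1)$ while the algebraic prefactor grows like $n^{1/4}$.
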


\begin{figure}[h!]
    \centering
    \includegraphics[width=3.5in]{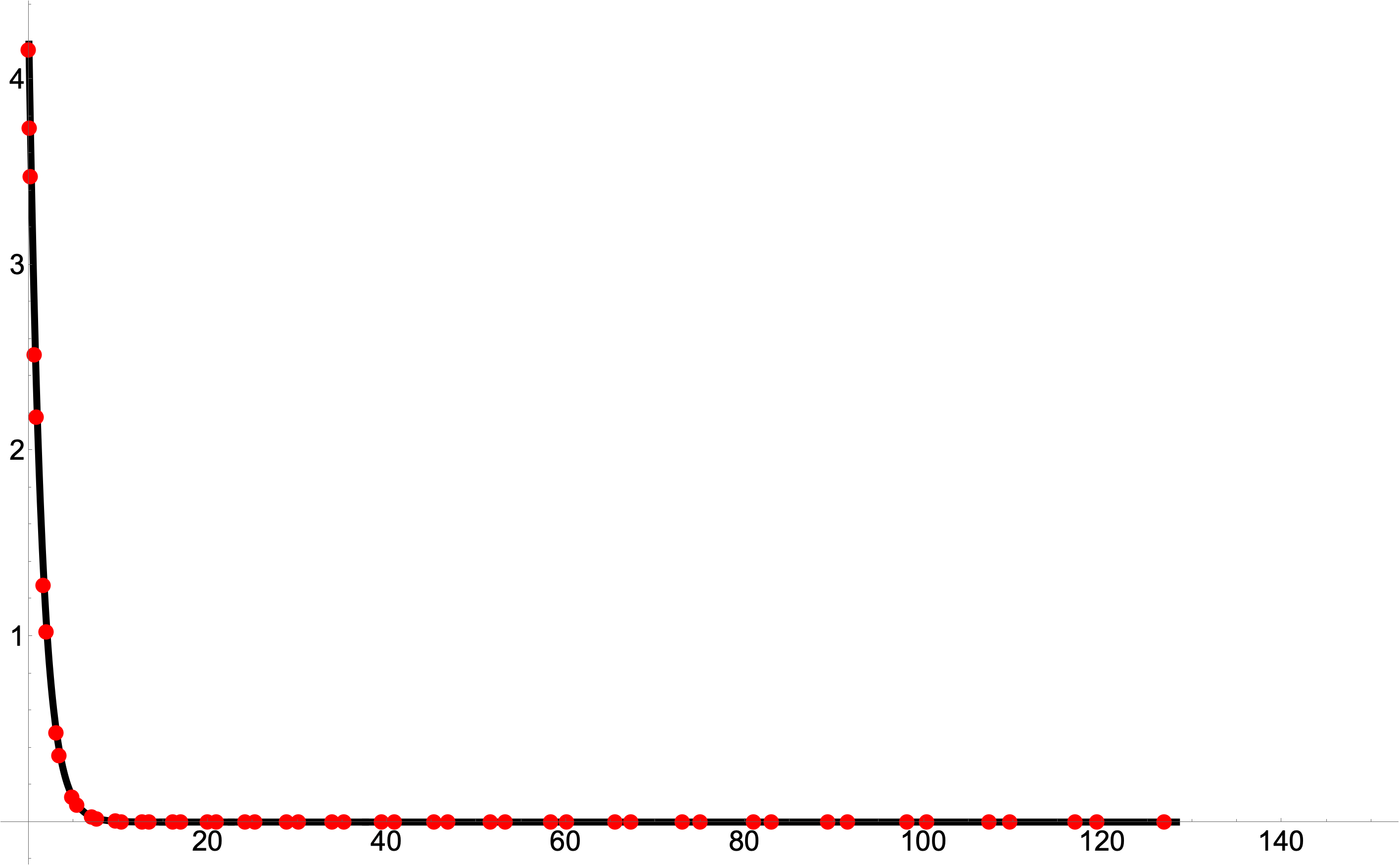}
    \caption{The values of $f_{2;5000}(x)$ in its support (in red) along with the continuous approximation $h_{2;5000}(x)$ (in black) derived in the proof of Theorem \ref{intro_thm:pmf_no_converge}.}
    \label{fig:support_with_curve}
\end{figure}

\begin{figure}[h!]
    \centering
    \includegraphics[width=4in]{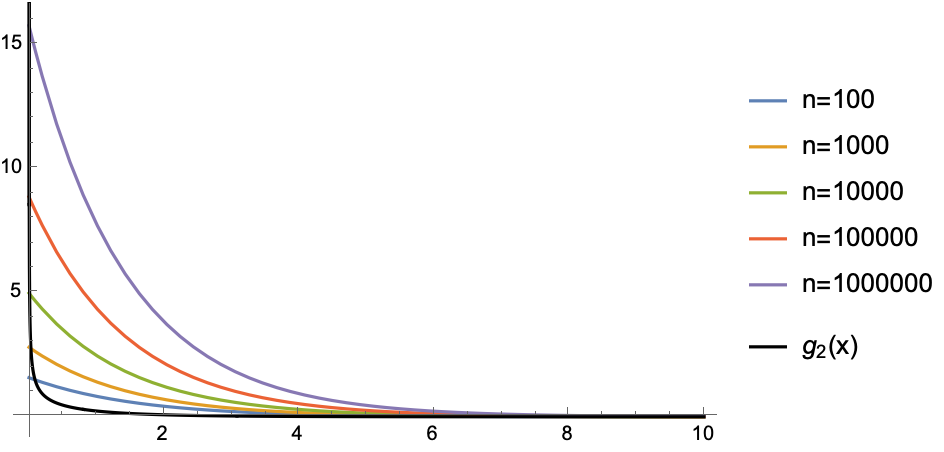}
    \caption{The approximations $h_{2;n}(x)$, depicted here for various values of $n$, do not converge pointwise to $g_2(x)$, depicted here in black, as $n \to \infty$, exemplifying Theorem \ref{intro_thm:pmf_no_converge}.}
    \label{fig:approximations_dont_converge}
\end{figure}

In the case of $t=3$, we prove that the nonzero points of $f_{3;n}(x)$ are asymptotic to integer multiples of a continuous function $h_{3;n}(x)$. We find an explicit formula for these continuous functions and show that they converge to scalar multiples of $g_3$. This result contrasts slightly with the $t=2$ case, but still falls short of outright convergence. 

\newpage
\begin{thm}\label{intro_thm:pmf_converge_t3} 
Assuming the notation above, the following are true:
\begin{itemize}
    \item[(a)] For positive integers $n$, let \[
        h_{3;n}(x) := \frac{3\sqrt{3}n^{3/2}\exp\left(\pi\sqrt{\frac{2}{3}}\left(\sqrt{n-\frac{\sqrt{6n}}{\pi}x} - \sqrt{n}\right)\right)}{2\pi\left(n - \frac{\sqrt{6n}}{\pi}x\right)^{3/2}}.
    \] 
    For all $x$ such that $f_{3;n}(x) \neq 0$, there exists a positive integer $\alpha(x,n)$ such that \[f_{3;n}(x) \sim \alpha(x, n)h_{3;n}(x)(1+\beta_3(x))\] as $n \to \infty$, where $\beta_3(x,n)$ is a constant satisfying $\beta_3(x,n) = O_\varepsilon((n - x\sqrt{n})^{-1/4+\varepsilon})$ for any $\varepsilon > 0$.
    
    \item[(b)] For all $x \in \R$, we have \[h_{3;n}(x) \to \dfrac{3\sqrt{3}}{2\pi}g_3(x)\] as $n \to \infty$.
\end{itemize}
\end{thm}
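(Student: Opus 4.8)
The plan is to follow the strategy used for $t=2$ in Theorem~\ref{intro_thm:pmf_no_converge}, the one essential new feature being that the number of $3$-core partitions of an integer is unbounded, rather than $\{0,1\}$-valued as the number of $2$-cores is; this is precisely what forces the extra integer factor $\alpha(x,n)$ in part~(a) and, in part~(b), yields convergence only to a scalar multiple of $g_3$. First I would unwind the change of variables: for $x\in\mathcal{S}_{3;n}$, setting $m := \tfrac{n}{3}-\tfrac{\sqrt{6n}}{3\pi}x \in \{0,1,\dots,\lfloor n/3\rfloor\}$ one has $n-3m=\tfrac{\sqrt{6n}}{\pi}x$ and $f_{3;n}(x)=\tfrac{\sqrt{6n}}{3\pi}\cdot\tfrac{p_3(m,n)}{p(n)}$. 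I would then use the factorization $p_3(m,n)=a_3(n-3m)\,c_3(m)$ coming from the $3$-core/$3$-quotient decomposition (equivalently, from the formula for $p_t(m,n)$ obtained earlier), where $a_3(N)$ denotes the number of $3$-core partitions of $N$ and $c_3(m)$ the number of ordered triples of partitions of total size $m$, so that $\sum_{m\ge0}c_3(m)q^m=\prod_{k\ge1}(1-q^k)^{-3}$. Since $f_{3;n}(x)\neq 0$ forces $a_3(n-3m)\ge 1$, one sets $\alpha(x,n):=a_3(n-3m)$; using the divisor-sum description $a_3(N)=\sum_{d\mid 3N+1}\bigl(\tfrac{d}{3}\bigr)$ underlying Theorem~\ref{intro_thm:support_t3}, $\alpha(x,n)$ is bounded by the number of divisors of $3(n-3m)+1$, hence $\alpha(x,n)\ll_\varepsilon n^\varepsilon$.

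Next I would supply the two asymptotic inputs and combine them. For $p(n)$ one uses Hardy--Ramanujan, $p(n)=\tfrac{1}{4\sqrt3\,n}\,e^{\pi\sqrt{2n/3}}\bigl(1+O(n^{-1/2})\bigr)$. For $c_3(m)$ one uses that, up to a power of $q$, $\prod_{k\ge1}(1-q^k)^{-3}=q^{1/8}\eta(\tau)^{-3}$ is a weakly holomorphic modular form of weight $-\tfrac32$ with a pole only at the cusp $i\infty$, so a Rademacher-type circle-method analysis (or Meinardus' theorem applied directly to $\prod_{k\ge1}(1-q^k)^{-3}$) gives $c_3(m)=\tfrac{1}{8\sqrt2}\,m^{-3/2}\,e^{\pi\sqrt{2m}}\bigl(1+O(m^{-1/2})\bigr)$. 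Substituting $m=\tfrac13\bigl(n-\tfrac{\sqrt{6n}}{\pi}x\bigr)$ --- so that $m^{-3/2}=3^{3/2}\bigl(n-\tfrac{\sqrt{6n}}{\pi}x\bigr)^{-3/2}$ and $\sqrt{2m}=\sqrt{\tfrac23}\,\sqrt{n-\tfrac{\sqrt{6n}}{\pi}x}$, while $\sqrt{2n/3}=\sqrt{\tfrac23}\,\sqrt n$ --- and simplifying $\tfrac{\sqrt{6n}}{3\pi}\cdot\tfrac{c_3(m)}{p(n)}$, the numerical constants collapse, $\tfrac{\sqrt6}{3\pi}\cdot\tfrac{1}{8\sqrt2}\cdot 4\sqrt3\cdot 3^{3/2}=\tfrac{3\sqrt3}{2\pi}$, and the surviving $n$-powers combine to $n^{3/2}$, yielding exactly $f_{3;n}(x)=\alpha(x,n)\,h_{3;n}(x)\bigl(1+\beta_3(x,n)\bigr)$ with $\beta_3(x,n)$ the combined relative error, which is $O_\varepsilon\bigl((n-x\sqrt n)^{-1/4+\varepsilon}\bigr)$ (indeed better). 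This establishes~(a).

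For part~(b) I would argue directly from the closed form of $h_{3;n}$. Since $k(3)=\tfrac{3-1}{2}=1$ and $\theta(3)=\sqrt{2/(3-1)}=1$, the density $g_3$ is the standard exponential density $g_3(x)=e^{-x}$ on the relevant half-line. For fixed $x$, $\bigl(n-\tfrac{\sqrt{6n}}{\pi}x\bigr)^{3/2}=n^{3/2}\bigl(1-\tfrac{\sqrt6}{\pi}x\,n^{-1/2}\bigr)^{3/2}=n^{3/2}\bigl(1+O(n^{-1/2})\bigr)$, so the power factor of $h_{3;n}(x)$ tends to $1$; and the expansion $\sqrt{n-\tfrac{\sqrt{6n}}{\pi}x}-\sqrt n=-\tfrac{\sqrt6}{2\pi}x+O(n^{-1/2})$, together with $\sqrt{\tfrac23}\cdot\sqrt6=2$, gives $\exp\bigl(\pi\sqrt{\tfrac23}\bigl(\sqrt{n-\tfrac{\sqrt{6n}}{\pi}x}-\sqrt n\bigr)\bigr)\to e^{-x}$. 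Hence $h_{3;n}(x)\to\tfrac{3\sqrt3}{2\pi}e^{-x}=\tfrac{3\sqrt3}{2\pi}g_3(x)$.

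I expect the main obstacle to be the asymptotic for $c_3(m)$: this is the one genuinely analytic ingredient, and producing the main term $\tfrac{1}{8\sqrt2}m^{-3/2}e^{\pi\sqrt{2m}}$ with a power-saving error requires running the circle-method / Rademacher machinery for the negative-weight form $\eta^{-3}$ (or carefully tracking constants through Meinardus' theorem), rather than appealing to a soft Tauberian statement; one must also verify that this quality of error holds uniformly over the relevant range, where $m\asymp n$. Everything else reduces to the combinatorial factorization $p_3=a_3\cdot c_3$, the elementary divisor bound for $a_3$, the Hardy--Ramanujan estimate, and the bookkeeping needed to match the surviving constant to $\tfrac{3\sqrt3}{2\pi}$ in the definition of $h_{3;n}$.
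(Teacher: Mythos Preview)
Your proposal is correct and follows essentially the same route as the paper: the factorization $p_3(m,n)=(\text{number of }3\text{-cores of }n-3m)\cdot(\text{number of }3\text{-colored partitions of }m)$, Meinardus' asymptotic for the $3$-colored partition count combined with Hardy--Ramanujan for $p(n)$, and then the Taylor expansion $\sqrt{n-\tfrac{\sqrt{6n}}{\pi}x}-\sqrt n\to-\tfrac{\sqrt6}{2\pi}x$ for part~(b). Your notation swaps the roles of the letters (the paper writes $p_3(m,n)=b(m)c(n-3m)$ with $b$ the $3$-colored count and $c$ the character sum equal to the $3$-core count), and you carry out the constant bookkeeping and the divisor bound on $\alpha(x,n)$ more explicitly than the paper does, but the argument is the same.
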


\begin{figure}[h!]
    \centering
    \includegraphics[width=4in]{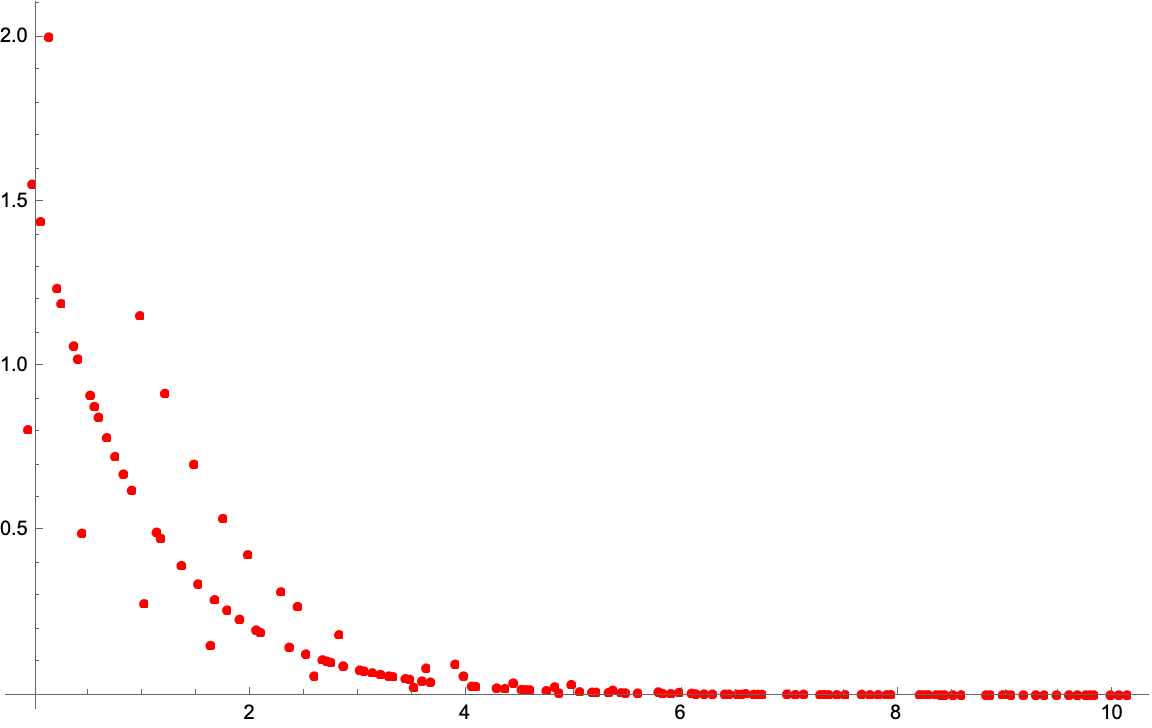}
    \caption{The values of $f_{3;10000}$ in its support appear to approximate several continuous curves.}
    \label{fig:t3_support}
\end{figure}

\begin{xrem}
In Proposition \ref{prop:explicit_coefficients_t_3}, we will show that the coefficients $p_3(m, n)$ can be written in terms of the character sum $c(k) = \sum_{d | 3k + 1} \left(\frac{d}{3}\right)$ and the number of $3$-colored partitions of $m$. The former is the source of the integer-valued scalar $\alpha(x, n)$ in Theorem \ref{intro_thm:pmf_converge_t3}.
\end{xrem}

\begin{figure}[h!]
    \centering
    \includegraphics[width=4in]{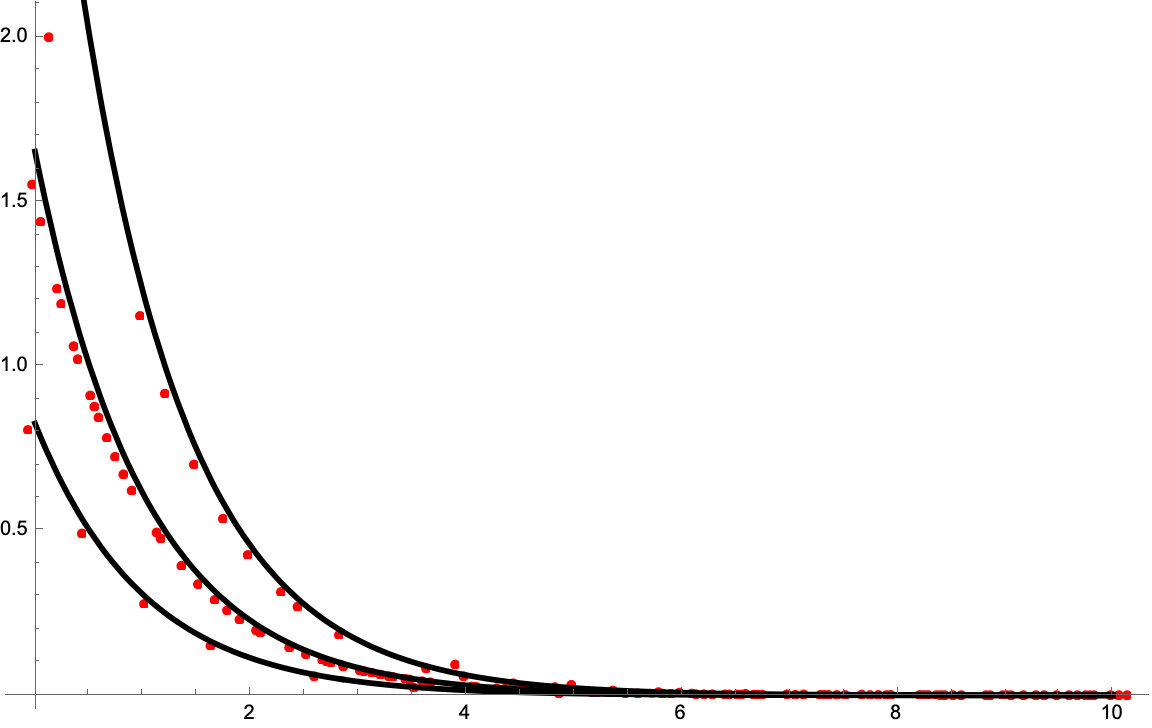}
    \caption{The values of $f_{2;10000}(x)$ in its support (in red) along with the continuous approximations $h_{3;10000}(x)$, $2h_{3;10000}(x)$, and $4h_{3,10000}(x)$ (in black) derived in the proof of Theorem \ref{intro_thm:pmf_converge_t3}.}
    \label{fig:t3_support_with_curve}
\end{figure}

Theorems \ref{intro_thm:support_t2}, \ref{intro_thm:support_t3}, and \ref{intro_thm:pmf_no_converge} illustrate how the probability mass functions of the random variables $\hat{Y}_{t}(n)$ fail to converge, in various senses, to the probability density functions consistent with the work of \cite{GOT22}, while Theorem \ref{intro_thm:pmf_converge_t3} gives convergence to scalar multiples of the probability density function of the Gamma distribution at best. In contrast, we show that the cumulative distribution functions of $\hat{Y}_t(n)$ \textit{do} in fact converge pointwise to the expected shifted Gamma distributions. Interestingly, the failure of the nonzero values of $f_{2;n}$ to converge to $g_2$ (Theorem \ref{intro_thm:pmf_no_converge}) suggests that, at the very least in the case $t = 2$, the spacing between points in the support plays a role in the convergence of cumulative distribution functions. By considering the pointwise convergence of characteristic functions in place of moment generating functions, our approach to Theorem \ref{intro_thm:convergence_in_distribution} avoids the obstacles that Griffin--Ono--Tsai encountered in their work. 

Fix $t \geq 2$ and let $k(t) = \frac{t-1}{2}, \theta(t) = \sqrt{\frac{2}{t-1}}$. We show that the sequence of random variables \[\frac{n\pi}{\sqrt{3(t-1)n}} - \frac{t\pi}{\sqrt{3(t-1)n}}\hat{Y}_t(n)\] converges to the random variable $X_{k(t),\theta(t)}$ in distribution.

\begin{thm}\label{intro_thm:convergence_in_distribution}
Assume the notation above. If $t \geq 2$, then the following are true:

\begin{itemize}
    \item[(a)] The sequence $\hat{Y}_t(n)$ satisfies
    \begin{align*}
    \hat{Y}_t(n) \sim \frac{n}{t} - \frac{\sqrt{3(t-1)n}}{\pi t} \cdot X_{k(t), \theta(t)},
    \end{align*}
    and has mean $\mu_t(n) \sim \frac{n}{t} - \frac{(t-1)\sqrt{6n}}{2 \pi t}$, mode $\mathrm{mo}_t(n) \sim \frac{n}{t} - \frac{(t-3)\sqrt{6n}}{ 2 \pi t}$, and variance ${\sigma_t^2(n) \sim \frac{3(t-1)n}{\pi^2t^2}}.$
    \item[(b)] If we let $\xi_{t,n}(x) := \mu_t(n) + \sigma_t(n)x$ and $F_t(k,n)$ denote the cumulative distribution function of $\hat{Y}_t(n)$, then \[\lim_{n \to \infty} F_t(\xi_{t,n}(x),n) = \frac{\gamma\left(\frac{t-1}{2},\sqrt{\frac{t-1}{2}}x + \frac{t-1}{2}\right)}{\Gamma\left(\frac{t-1}{2}\right)},\] where $\gamma(s,t)$ is the lower incomplete gamma function.
\end{itemize}
\end{thm}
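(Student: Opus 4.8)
The plan is to prove part (b) by establishing pointwise convergence of the characteristic functions of the normalized random variables $\widetilde Y_t(n) = \frac{n\pi}{\sqrt{3(t-1)n}} - \frac{t\pi}{\sqrt{3(t-1)n}}\hat Y_t(n)$ to the characteristic function of $X_{k(t),\theta(t)}$, and then invoking L\'evy's continuity theorem; part (a) then follows by reading off the mean, mode, and variance of the limiting shifted Gamma law and transferring the asymptotics back through the linear change of variables (using that the first two moments of $P_t(n,x)$ converge after rescaling, which one checks directly from the generating function below). The characteristic function of $\widetilde Y_t(n)$ evaluated at $\xi \in \R$ is
\[
\mathbb{E}\!\left[e^{i\xi \widetilde Y_t(n)}\right] = e^{i\xi n\pi/\sqrt{3(t-1)n}}\, \frac{P_t\!\big(n, e^{-i\xi t\pi/\sqrt{3(t-1)n}}\big)}{p(n)},
\]
where $p(n) = P_t(n,1)$ is the partition function. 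So everything reduces to understanding the asymptotics of $P_t(n,x)$ for $x = e^{i\theta}$ with $\theta = \theta(n) = O(n^{-1/2})$ small.

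The key step is to obtain a uniform asymptotic for $P_t(n, e^{i\theta})$ as $n \to \infty$ with $\theta$ in a shrinking window around $0$. Here I would start from the Han--Nekrasov--Okounkov-type generating function: specializing \eqref{eqn:han_generalization} appropriately gives a product formula for $\sum_n P_t(n,x) q^n$, namely a weighted eta-quotient whose exponents depend on $x$. Writing $x = e^{i\theta}$ and applying the circle method / a saddle-point analysis (Meinardus' theorem, or the Ingham Tauberian argument in the style of \cite{GOT22}), the dominant singularity is at $q \to 1$, and the main term takes the shape $C(\theta)\, n^{a(\theta)} \exp\!\big(b(\theta)\sqrt{n}\big)$ where $b(\theta) = \pi\sqrt{\tfrac{2}{3}}\sqrt{1 + (\text{linear in }\theta)} + \cdots$; expanding $b(\theta)$ to second order in $\theta$ and tracking the $n$-dependence is exactly what produces the quadratic exponent characteristic of the Gamma characteristic function. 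Dividing by the Hardy--Ramanujan asymptotic $p(n) \sim \frac{1}{4n\sqrt 3}e^{\pi\sqrt{2n/3}}$ and substituting $\theta = -\xi t\pi/\sqrt{3(t-1)n}$, the prefactors cancel and the exponential collapses to $(1 - i\xi\theta(t))^{-k(t)}$ up to the shift $e^{i\xi k(t)\theta(t)}$ — precisely $\mathbb{E}[e^{i\xi X_{k(t),\theta(t)}}]$ after accounting for the centering. The sparsity phenomena of Theorems \ref{intro_thm:support_t2} and \ref{intro_thm:support_t3} are not an obstruction here because the characteristic function only sees the \emph{aggregate} mass, not its support; indeed the functions $h_{t;n}$ of Theorems \ref{intro_thm:pmf_no_converge} and \ref{intro_thm:pmf_converge_t3} already encode the same $\exp(\pi\sqrt{2/3}(\sqrt{n - c\sqrt n\,x} - \sqrt n))$ growth, and one can in principle package the argument through a Riemann-sum estimate $F_t(\xi_{t,n}(x),n) = \sum_{m} p_t(m,n)/p(n) \approx \int h_{t;n}$, but the characteristic-function route is cleaner since it sidesteps the oscillation in the support entirely.

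The main obstacle I anticipate is making the asymptotic for $P_t(n,e^{i\theta})$ \emph{uniform} in $\theta$ over the range $|\theta| \le K n^{-1/2}$, rather than just for fixed $\theta$: one needs error terms $o(1)$ relative to the main term that hold uniformly, which requires care in the minor-arc estimates when $x$ is a complex number of modulus one (the product $\prod(1 - (yx^t)^k)^{-(t-z)}$ no longer has positive coefficients, so the usual "replace everything by absolute values" bound is lossy and must be replaced by a genuine stationary-phase argument away from $q=1$). A secondary technical point is justifying the interchange that identifies the limit: one must check that the limiting function $\xi \mapsto e^{i\xi k(t)\theta(t)}(1 - i\xi\theta(t))^{-k(t)}$ is continuous at $\xi = 0$ (it is, and equals $1$ there), so L\'evy's theorem applies and the convergence of characteristic functions upgrades to convergence of cumulative distribution functions at every continuity point of the limit — and the shifted Gamma CDF is continuous everywhere, giving the stated pointwise limit $\gamma\big(\tfrac{t-1}{2}, \sqrt{\tfrac{t-1}{2}}\,x + \tfrac{t-1}{2}\big)/\Gamma\big(\tfrac{t-1}{2}\big)$ for all $x$. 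For $t=2,3$ the fact that $k(t)\le 1$ (so the Gamma law has no finite moment generating function) never enters, precisely because characteristic functions always exist; this is the conceptual reason the method succeeds where the moment-generating-function approach of \cite{GOT22} broke down.
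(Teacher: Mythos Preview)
Your proposal is correct and takes essentially the same approach as the paper: replace moment generating functions by characteristic functions, obtain a saddle-point asymptotic for $P_t(n, e^{\alpha/\sqrt n})$ with $\alpha$ purely imaginary (the paper's Proposition~\ref{prop:formula_P(n, x)}, a direct adaptation of \cite[Proposition~2.2]{GOT22} to imaginary $\alpha$), divide by the Hardy--Ramanujan asymptotic for $p(n)$, and invoke L\'evy's continuity theorem. One simplification relative to what you anticipate: the uniformity in $\theta$ that worries you is not needed, since L\'evy's theorem requires only pointwise convergence at each fixed $\xi$, and for fixed $\xi$ the parameter $\alpha = i\xi t\pi/\sqrt{3(t-1)}$ is a single purely imaginary constant, so the saddle-point estimate need only be established for each such $\alpha$ individually---which is exactly how the paper proceeds.
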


Theorem \ref{intro_thm:convergence_in_distribution} extends the results of \cite[Theorem 1.2]{GOT22} to hold for $t = 2, 3$. Thus, our theorem completes the characterization of the limiting distribution of $\{\hat{Y}_t(n)\}$ for all values of $t$.

\begin{figure}[h!]
    \centering
    \includegraphics[width=4in]{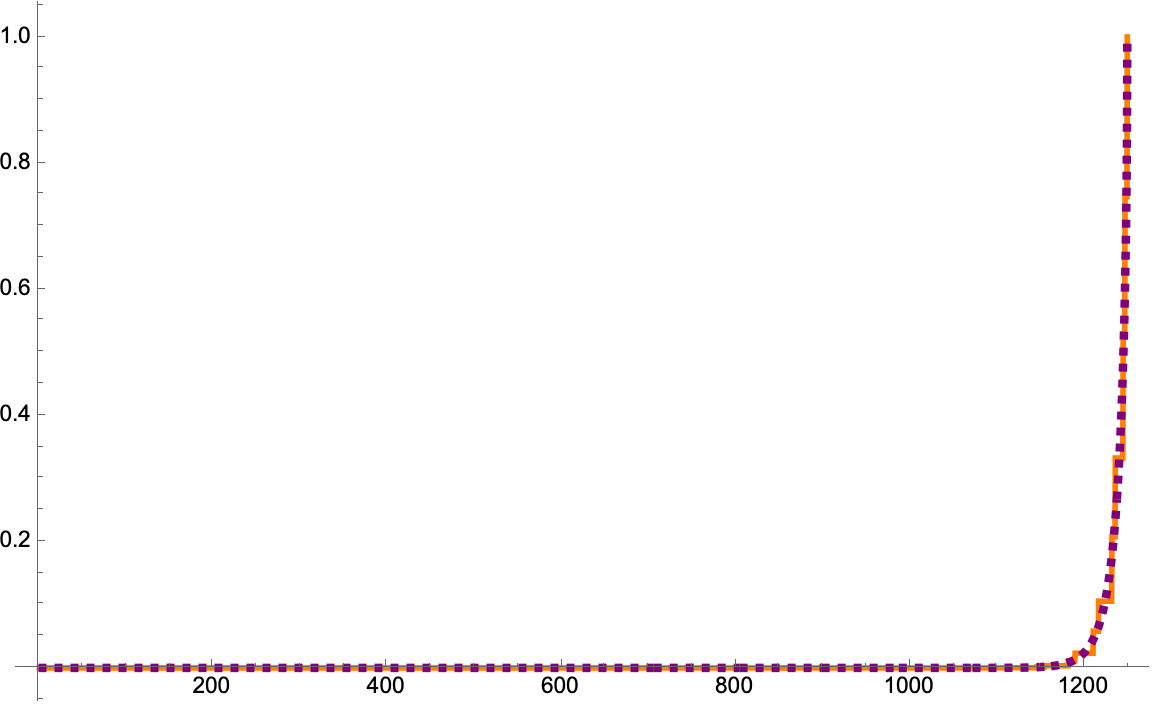}
    \caption{Plot of the cumulative distribution function of $\hat{Y}_2(2500)$ (in orange) and the cumulative distribution function of the shifted random variable specified in Theorem \ref{intro_thm:convergence_in_distribution}(a) with $t = 2, n = 1000$ (in dashed purple).} 
    \label{fig:cdf_2_2500}
\end{figure}

\begin{figure}[h!]
    \centering
    \includegraphics[width=4in]{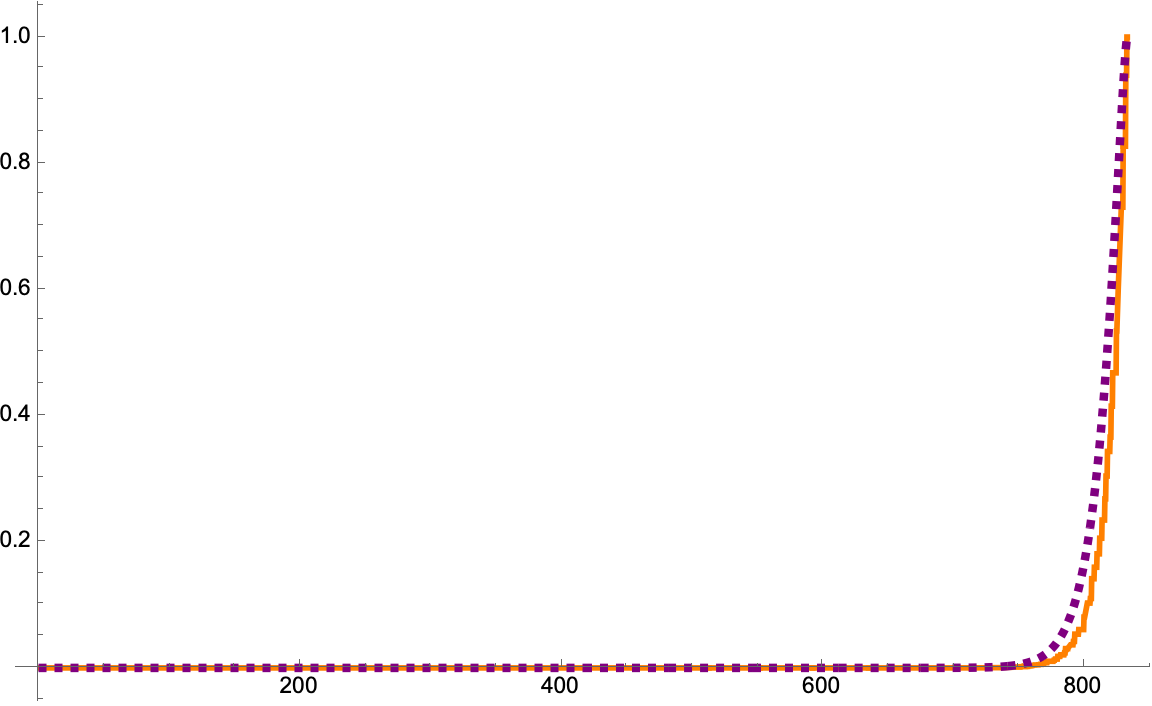}
    \caption{Plot of the cumulative distribution function of $\hat{Y}_3(2500)$ (in orange) and the cumulative distribution function of the random variable specified in Theorem \ref{intro_thm:convergence_in_distribution}(a) with $t = 3, n = 2500$ (in dashed purple).} 
    \label{fig:cdf_3_1000}
\end{figure}

The structure of the article is as follows. In Section \ref{sec:pmfs}, we explicitly describe the supports of the probability mass functions $\hat{Y}_{t}(n)$ for $t = 2, 3$ in Theorems \ref{intro_thm:support_t2} and \ref{intro_thm:support_t3}. Moreover, we show the failure of $f_{2;n}$ to converge to $g_2(x)$ in contrast with the convergence (up to a scalar multiple) of $f_{3;n}(x)$ to $g_3(x)$, thereby proving Theorems \ref{intro_thm:pmf_no_converge} and \ref{intro_thm:pmf_converge_t3}. Finally, in Section \ref{sec:cdfs}, we prove Theorem \ref{intro_thm:convergence_in_distribution} to show that the cumulative distribution functions of $\hat{Y}_t(n)$ for $t = 2, 3$ still converge to the desired shifted Gamma distribution. 

\subsection*{Acknowledgements} The authors would like to thank Wei-Lun Tsai for supervising this project and Ken Ono for his generous support. The authors were participants in the 2022 UVA REU in Number Theory. They are grateful for the support of grants from the National Science Foundation
(DMS-2002265, DMS-2055118, DMS-2147273), the National Security Agency (H98230-22-1-0020), and the Templeton World Charity Foundation.

\section{Probability Mass Functions}\label{sec:pmfs}

In this section, we study the scaled mass functions $f_{t;n}$ corresponding to $\{\hat{Y}_t(n)\}$ for $t = 2, 3$. Recall from Section \ref{sec:intro} that $p_t(m, n)$ denotes the number of partitions of $n$ with $m$ hooks of length divisible by $t$. For each $t \geq 1$, Han \cite{Han10} derived the following multivariate generating function for the quantities $p_t(m,n)$: \begin{align} \label{generating_function}
G_t(x; q) =\prod_{n = 1}^\infty \frac{(1 - q^{tn})^t}{(1 - (xq^t)^n)^t(1 - q^n)} =\sum_{n=0}^\infty \sum_{m=0}^\infty p_t(m, n)x^m q^n.
\end{align} Moreover, recall the notation \[P_t(n;x) := \sum_{m=0}^\infty p_t(m,n)x^m.\] Observe that $P_t(n;x)$ is a polynomial in $x$; we will show that $\deg P_t(n;x) = \lfloor n/t\rfloor$ for each $t = 2,3$. For any positive integers $n$ and $k$, we can describe the probability mass functions of $\hat{Y}_t(n)$ via \begin{align}\label{eqn:probability}\mathbb{P}(\hat{Y}_t(n) = k) = \frac{p_t(k,n)}{p(n)}.\end{align}
Before we proceed, we will need to define, for a fixed nonnegative integer $t$, the $t$-colored partitions of any positive integer $n$. These $t$-colored partitions will play an important role in our characterization of the nonzero values of $p_t(m,n)$ in the cases $t = 2$ and $3$.

\begin{defn}
Fix a nonnegative integer $t$ and a positive integer $n$. A $t$-colored partition of $n$ is a multiset of $t$ (possibly empty) sequences of nonincreasing positive integers such that sum of all integers across all sequences is equal to $n$. 
\end{defn}

Observe that the number of $t$-colored partitions of some positive integer $n$ is given by the coefficient of $y^n$ after formally expanding the product
\begin{align}\label{eqn:tcolored_gf} \prod_{m=1}^\infty \frac{1}{(1-y^m)^t}.\end{align}

We begin in Section \ref{sec:even_hl} with a characterization of the support of $\hat{Y}_t(n)$ for $t=2,3$ and estimate how quickly it vanishes as $n$ grows. As we have seen in Figure \ref{fig:support_looks_smooth}, the nonzero values of the function $f_{2; n}(x)$ appear to approximate a continuous curve -- we determine this curve $h_{2;n}(x)$ in Section \ref{sec:even_hl} using asymptotic formulas of Meinardus and Hardy-Ramanujan. However, we show that even this continuous approximation will not converge pointwise to the probability density function of the Gamma distribution. In Section \ref{sec:three_hl} we determine the support of $f_{3; n}(x)$ and find an asymptotic upper bound on its decay as $n \to \infty$. In contrast to the case of $t = 2$, we show that $f_{2; n}(x)$ approximate multiple continuous curves which converge to scalar multiples of $g_3(x)$. 

\subsection{Even Hook Lengths}\label{sec:even_hl}
In this subsection, we begin with an explicit description of the coefficients $p_2(m,n)$ in terms of the number of two-colored partitions of $m$ and prove Theorem \ref{intro_thm:support_t2}. Then, for all $x \in \mathbb{R}$ such that $f_{2; n}(x) \neq 0$, we estimate $f_{2; n}(x)$ with a continuous function $h_{2;n}(x)$ and use the explicit formula for $h_{2;n}(x)$ to prove Theorem \ref{intro_thm:pmf_no_converge}.

\begin{prop}\label{prop:explicit_coefficients_t_2}
When $t = 2$, the coefficients of $P_2(n;x)$ satisfy \[p_2(m,n) = \begin{cases} a(m) & \text{if $n - 2m$ is a triangular number} \\ 0 & \text{otherwise,}\end{cases}\] where $a(k)$ denotes the number of $2$-colored partitions of $k$. 
\end{prop}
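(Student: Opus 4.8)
The plan is to reduce Han's generating function \eqref{generating_function} at $t=2$ to a product of two series, one carrying all of the $x$-dependence and one recording the triangular-number condition, and then to read off the coefficient of $x^m q^n$.

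First I would factor
\[
G_2(x;q) \;=\; \prod_{n\ge 1}\frac{1}{(1-(xq^2)^n)^2}\;\cdot\;\prod_{n\ge 1}\frac{(1-q^{2n})^2}{1-q^n}.
\]
In the first factor, substituting $y = xq^2$ into the generating function \eqref{eqn:tcolored_gf} for $2$-colored partitions yields
\[
\prod_{n\ge1}\frac{1}{(1-(xq^2)^n)^2} \;=\; \sum_{m\ge 0} a(m)\,x^m q^{2m},
\]
so this factor is the only source of powers of $x$, and in it each power $x^m$ is bound to a matching power $q^{2m}$.

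Second, I would identify the $x$-free factor with a theta-type series. Since $\prod_{n\ge1}(1-q^n) = \prod_{n\ge1}(1-q^{2n-1})(1-q^{2n})$, we have $\prod_{n\ge1}\frac{(1-q^{2n})^2}{1-q^n} = \prod_{n\ge1}\frac{1-q^{2n}}{1-q^{2n-1}}$, and Gauss's identity gives
\[
\prod_{n\ge1}\frac{1-q^{2n}}{1-q^{2n-1}} \;=\; \sum_{k\ge 0} q^{k(k+1)/2},
\]
the generating function for triangular numbers. Combining the two factors,
\[
G_2(x;q) \;=\; \Big(\sum_{m\ge 0} a(m)\,x^m q^{2m}\Big)\Big(\sum_{k\ge 0} q^{k(k+1)/2}\Big).
\]
Extracting the coefficient of $x^m$ leaves $a(m)\,q^{2m}\sum_{k\ge0}q^{k(k+1)/2}$, and then the coefficient of $q^n$ in this is $a(m)$ exactly when $n - 2m$ is a triangular number and $0$ otherwise, which is the assertion. (In passing this also shows $\deg P_2(n;x) = \lfloor n/2\rfloor$, since at $m=\lfloor n/2\rfloor$ the residue $n-2m\in\{0,1\}$ is triangular and $a(m)\ge 1$.)

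The only nonroutine input is Gauss's identity $\prod_{n\ge1}\frac{1-q^{2n}}{1-q^{2n-1}} = \sum_{k\ge0}q^{k(k+1)/2}$, which I would either cite or derive in a line from the Jacobi triple product; everything else is formal-power-series bookkeeping, and no convergence issue arises because for each fixed $n$ only finitely many terms contribute to $[q^n]G_2(x;q)$.
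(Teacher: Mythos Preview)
Your proof is correct and follows essentially the same route as the paper: factor $G_2(x;q)$ into the $2$-colored partition generating function in $xq^2$ times the $x$-free factor, identify the latter with $\sum_{k\ge 0} q^{k(k+1)/2}$ via the classical Gauss/Jacobi identity, and read off the coefficient of $x^m q^n$. The paper also mentions the alternative interpretation of the second factor as the generating function for $2$-core partitions, but otherwise the arguments coincide.
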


\begin{proof}
By a simple observation, we have \[\prod_{m=1}^\infty \frac{1}{(1-(xq^2)^m)^2} = \sum_{k=0}^\infty a(k)x^kq^{2k},\] where $a(k)$ is the number of $2$-colored partitions of $k$. On the other hand, a classical identity of Jacobi gives that \[\prod_{m=1}^\infty \frac{(1-q^{2m})^2}{(1-q^m)} = \sum_{\ell=1}^\infty q^{\ell(\ell+1)/2}.\] Alternatively, one can obtain the identity above by realizing that the left-hand side is the generating function for the $2$-core partitions, and noting that the $2$-core partitions are precisely the partitions of the form $(\ell,\ell-1,\ldots,2,1)$ for positive integers $\ell$. The desired result follows immediately upon studying the coefficient of $x^mq^n$ in (\ref{generating_function}): \[G_2(x;q) = \prod_{m=1}^\infty \frac{(1-q^{2m})^2}{(1-(xq^2)^m)^2(1-q^m)} = \left(\sum_{k=0}^\infty a(k)x^kq^{2k}\right)\left(\sum_{\ell=1}^\infty q^{\ell(\ell+1)/2}\right). \qedhere\]
\end{proof}

\begin{cor}\label{cor:deg_p_2}
The polynomial $P_2(n,x)$ has degree $\lfloor n/2 \rfloor$. 
\end{cor}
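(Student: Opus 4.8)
The goal is to show $\deg P_2(n,x) = \lfloor n/2 \rfloor$, and the plan is to read this off directly from Proposition~\ref{prop:explicit_coefficients_t_2}. By that proposition, $p_2(m,n) \neq 0$ precisely when $n - 2m$ is a triangular number (including $0 = \tfrac{0\cdot 1}{2}$, and I would note explicitly that $0$ counts as triangular so that $a(0) = 1 \neq 0$ is consistent). Hence the degree of $P_2(n,x)$ is the largest integer $m$ with $0 \le m \le n$ for which $n - 2m$ is triangular; equivalently, writing $n - 2m = \binom{\ell+1}{2}$ for some $\ell \ge 0$, we want to maximize $m$, i.e.\ minimize the triangular number $n - 2m$ subject to $n - 2m \ge 0$ and $n - 2m \equiv n \pmod 2$.

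The key point is therefore a parity observation: triangular numbers $\tfrac{\ell(\ell+1)}{2}$ realize both parities infinitely often, and in particular among $\{0, 1, 3, 6, 10, \dots\}$ the two smallest are $0$ (even) and $1$ (odd). So if $n$ is even, the minimal admissible value of $n - 2m$ is $0$, giving $m = n/2 = \lfloor n/2 \rfloor$; if $n$ is odd, the minimal admissible value is $1$, giving $m = (n-1)/2 = \lfloor n/2 \rfloor$. In both cases the maximal $m$ with $p_2(m,n) \neq 0$ equals $\lfloor n/2 \rfloor$, and since $a(k) > 0$ for every $k \ge 0$ the corresponding coefficient is genuinely nonzero. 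This establishes $\deg P_2(n,x) = \lfloor n/2 \rfloor$.

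There is essentially no obstacle here; the only thing to be careful about is the convention that $0$ is a triangular number (via $\ell = 0$), which is exactly what makes the even case work and is consistent with the identity $\prod_{m \ge 1}(1-q^{2m})^2/(1-q^m) = \sum_{\ell \ge 1} q^{\ell(\ell+1)/2}$ in the proof of Proposition~\ref{prop:explicit_coefficients_t_2} once one also allows the empty $2$-core partition contributing the constant term $1$. I would write the argument in three or four lines, splitting on the parity of $n$ as above.
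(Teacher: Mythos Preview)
Your argument is correct and essentially identical to the paper's: both use Proposition~\ref{prop:explicit_coefficients_t_2} and the observation that $n - 2\lfloor n/2\rfloor \in \{0,1\}$, with both $0$ and $1$ triangular, to conclude $p_2(\lfloor n/2\rfloor,n) = a(\lfloor n/2\rfloor) \neq 0$. Your side remark about the $\ell=0$ term in the Jacobi identity is well taken, but it concerns a typo in the proposition's proof rather than anything in the corollary itself.
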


\begin{proof}
By Proposition \ref{prop:explicit_coefficients_t_2}, we see that $\deg P_2(n,x)\leq\lfloor n/2\rfloor$. Note that $n - 2\lfloor n/2 \rfloor = 0$ or $1$, which are both triangular numbers, thus $p_2(\lfloor n/2 \rfloor,n) = a(\lfloor n/2 \rfloor) \neq 0$. 
\end{proof}

As depicted in Figure \ref{fig:t2_sparse}, the distribution of $\hat{Y}_2(100)$ is populated with zeros for $x \in \{0, 1, \dots, 50\}$ (i.e., the support is sparse). We provide asymptotics on the proportion of nonzero coefficients of $P_2(n;x)$ to show that in fact the density of the zeros approaches $1$ in the limit of large $n$.

\begin{cor}\label{cor:vanishing_coefficients_t_2}
As $n \to \infty$, we have \[\frac{\#\{0 \leq m \leq \deg P_2(n,x)\ |\ p_2(m,n) \neq 0\}}{\deg P_2(n,x)} = O(n^{-1/2}).\] In particular, \[\lim_{n \to \infty} \frac{  \#\{0 \leq m \leq \deg P_2(n,x)\ |\ p_2(m,n) = 0\}}{\deg P_2(n,x)} = 1.\]
\end{cor}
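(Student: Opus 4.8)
The plan is to reduce the statement entirely to a count of triangular numbers in an interval, using Proposition~\ref{prop:explicit_coefficients_t_2}. First I would observe that $a(k) \geq 1$ for every integer $k \geq 0$, since there is always at least one $2$-colored partition of $k$ (take the empty configuration when $k = 0$, and otherwise place all of $k$ in a single sequence). Hence, by Proposition~\ref{prop:explicit_coefficients_t_2}, for $0 \leq m \leq \lfloor n/2\rfloor$ we have $p_2(m,n) \neq 0$ if and only if $n - 2m$ is a triangular number, so that
\[
\#\{0 \leq m \leq \deg P_2(n,x) \mid p_2(m,n) \neq 0\} = \#\{0 \leq m \leq \lfloor n/2\rfloor \mid n - 2m \text{ is triangular}\}.
\]

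Next I would bound the right-hand side. As $m$ ranges over $\{0, 1, \ldots, \lfloor n/2\rfloor\}$, the quantity $n - 2m$ ranges over exactly those nonnegative integers at most $n$ congruent to $n$ modulo $2$, each value occurring once; in particular the count above is at most the total number of triangular numbers in $[0,n]$. Writing $T_\ell = \ell(\ell+1)/2$, the inequality $T_\ell \leq n$ forces $\ell \leq \tfrac{1}{2}(\sqrt{8n+1}-1) < \sqrt{2n}$, so there are at most $\sqrt{2n}+1$ such triangular numbers. (If one tracks the parity of $T_\ell$, which is periodic in $\ell$ with period $4$, equal to $0,1,1,0$ on a period, then only about half of these have the correct parity to equal $n-2m$; but this refinement is not needed for the big-$O$ statement, though it would pin down the constant.)

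Finally, since $\deg P_2(n,x) = \lfloor n/2\rfloor \geq (n-1)/2$ by Corollary~\ref{cor:deg_p_2}, combining the two previous steps yields
\[
\frac{\#\{0 \leq m \leq \deg P_2(n,x) \mid p_2(m,n) \neq 0\}}{\deg P_2(n,x)} \leq \frac{\sqrt{2n}+1}{(n-1)/2} = O(n^{-1/2}),
\]
which is the first claim. The ``in particular'' statement then follows by taking complements within $\{0,1,\ldots,\deg P_2(n,x)\}$: the density of indices $m$ with $p_2(m,n) = 0$ equals $1$ minus the density estimated above, hence tends to $1$ as $n \to \infty$. I do not anticipate a genuine obstacle here; the only points requiring any care are the (trivial) nonvanishing of $a(k)$ and the elementary estimate for the number of triangular numbers below $n$, both of which are immediate.
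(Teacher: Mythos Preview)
Your proposal is correct and follows essentially the same route as the paper: invoke Proposition~\ref{prop:explicit_coefficients_t_2} to identify the nonzero coefficients with triangular values of $n-2m$, bound this by the total count of triangular numbers in $[0,n]$ (which is $O(n^{1/2})$), and divide by $\deg P_2(n,x) = \lfloor n/2\rfloor \asymp n$. Your version is slightly more explicit (recording $a(k)\geq 1$ and the bound $\sqrt{2n}+1$), but there is no substantive difference.
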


\begin{proof}
In light of Proposition \ref{prop:explicit_coefficients_t_2}, we see that \begin{align*}
    \#\{p_2(m,n) \neq 0\} = \#\{0 \leq m \leq \deg P_2(n,x)\ |\ n-2m \text{ is a triangular number}\}.
\end{align*} However, the right-hand side is at most \[\#\{0 \leq k \leq n \ |\ k \text{ is a triangular number}\} = O(n^{1/2}).\] The result follows since Corollary \ref{cor:deg_p_2} gives us that $\deg P_2(n,x) = \lfloor n/2 \rfloor \asymp n$.
\end{proof}

Combining the previous results, we can now prove Theorem \ref{intro_thm:support_t2}.

\begin{proof}[Proof of Theorem \ref{intro_thm:support_t2}]
Observe that (a) follows from Proposition \ref{prop:explicit_coefficients_t_2} and (b) follows from Corollary \ref{cor:vanishing_coefficients_t_2}, so it suffices to prove (c).

Recall that for any positive integer $n$, we defined \[\mathcal{S}_{2;n} := \left\{x \in \R\ \bigg|\ \frac{n}{2} - \frac{\sqrt{3n}}{2\pi}x \in \{0,1,\ldots,\lfloor n/2\rfloor\}\right\}.\] 

Fix $x$ and $n_0$ such that the scaled mass function $f_{2;n_0}$ is defined at $x$. Moreover, for the sake of convenience, we assume that $n_0/2 \in \Z$ and $3n_0$ is square-free (the proof is very similar without these assumptions, albeit more complicated from a notational perspective). Under these assumptions, we see that there must exist some $q \in \Z$ such that \[x = \frac{2q\pi}{\sqrt{3n_0}}.\] Now, for each positive integer $j$, define $n_j := j^2n_0$, so that \[\frac{n_j}{2} - \frac{\sqrt{3n_j}}{2\pi}x = \frac{j^2n_0}{2} - qj \in \{0,1,\ldots, n_j/2\}.\] Note that the nonnegativity of this quantity follows from the fact that $n_0/2 \geq q$ by assumption. Thus, we have produced an infinite sequence $\{n_j\}_{j=1}^\infty$ such that $x \in \mathcal{S}_{2;n_j}$ for all $j$. Now, we claim that there are infinitely many values of $j$ such that $f_{2;n_j}(x) = 0$. 

Proposition \ref{prop:explicit_coefficients_t_2} shows us that  $f_{2;n_j}(x) = 0$ whenever \[n_j - 2\left(\frac{n_j}{2} - \frac{\sqrt{3n_j}}{2\pi}x\right) = 2qj\] is not a triangular number, and indeed, for a fixed integer $q$, there are infinitely many choices of $j$ such that $2qj$ is not a triangular number. Thus, we have shown that $f_{2;n_j}(x) = 0$ for infinitely many values of $j$. Since $g_2(x) \neq 0$ for $x > 0$, we deduce that $f_{2;n_j}(x)$ cannot converge to $g_2(x)$ as $j \to \infty$. 
\end{proof}

Recall from Figure \ref{fig:support_looks_smooth} that the nonzero values of $f_{2;n}(x)$ appear to approximate a continuous function. We will show using Proposition \ref{prop:explicit_coefficients_t_2} that these values are indeed asymptotic to a continuous function.

\begin{lem}\label{lem:meinardus}
For fixed $m$, we have \[\frac{p_2(m,n)}{p(n)} \sim \frac{n\exp\left(\pi\sqrt{\frac{2}{3}}(\sqrt{2m}-\sqrt{n})\right)}{3^{1/4}m^{5/4}}(1+\widetilde{\beta}_2(m)),\] as $n \to \infty$ through suitable values of $n$ (i.e. such that $n -2m$ is a triangular number), where $\widetilde{\beta}_2(m)$ is a constant satisfying $\widetilde{\beta}_2(m) = O_\varepsilon(m^{-1/4+\varepsilon})$ for any $\varepsilon > 0$. 
\end{lem}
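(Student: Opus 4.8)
The plan is to combine Proposition \ref{prop:explicit_coefficients_t_2}, which tells us that $p_2(m,n) = a(m)$ whenever $n-2m$ is a triangular number (and $0$ otherwise), with two classical asymptotic estimates: the Meinardus-type asymptotic for the number $a(m)$ of $2$-colored partitions of $m$, and the Hardy--Ramanujan asymptotic for the ordinary partition function $p(n)$. Writing the ratio $p_2(m,n)/p(n) = a(m)/p(n)$ for admissible $n$, the entire statement reduces to computing the asymptotics of each factor and simplifying the resulting exponentials and polynomial prefactors.

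First I would recall the Hardy--Ramanujan formula $p(n) \sim \frac{1}{4n\sqrt{3}}\exp\!\left(\pi\sqrt{2n/3}\right)$. Next, I would invoke the Meinardus asymptotic for $r$-colored partitions: since the generating function $\prod_{m\ge 1}(1-y^m)^{-2}$ is the $r=2$ case of Meinardus's theorem (here the associated Dirichlet series is $2\zeta(s)$, with a simple pole of residue $2$ at $s=1$, and the relevant constants $A = 2\zeta(2) = \pi^2/3$, $\alpha = 1$), one gets $a(m) \sim C\, m^{\kappa} \exp\!\left(2\pi\sqrt{m/3}\right)$ for explicit $C$ and $\kappa$; carrying the constants through Meinardus's formula (or citing the known statement for $2$-colored partitions) yields $\kappa = -5/4$ and a constant that, when divided by the $p(n)$-constant $\frac{1}{4n\sqrt 3}$, produces the factor $\frac{n}{3^{1/4}m^{5/4}}$ in the statement. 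I would also need the error term in Meinardus's theorem, which is of the shape $1 + O_\varepsilon(m^{-\kappa_0 + \varepsilon})$ for a positive $\kappa_0$; tracking the exponent gives the claimed $\widetilde{\beta}_2(m) = O_\varepsilon(m^{-1/4+\varepsilon})$.

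The one genuine simplification to check is that the exponentials assemble correctly. We have $a(m)/p(n)$ contributing $\exp\!\left(2\pi\sqrt{m/3} - \pi\sqrt{2n/3}\right)$, and since $2\pi\sqrt{m/3} = \pi\sqrt{4m/3} = \pi\sqrt{2}\sqrt{2m/3} = \pi\sqrt{2/3}\cdot\sqrt{2m}$ and likewise $\pi\sqrt{2n/3} = \pi\sqrt{2/3}\cdot\sqrt n$, the exponent is exactly $\pi\sqrt{2/3}\,(\sqrt{2m} - \sqrt n)$, matching the lemma. The polynomial prefactor $n/(3^{1/4}m^{5/4})$ then follows by dividing the Meinardus constant $\times m^{-5/4}$ by $(4\sqrt 3\, n)^{-1}$ and checking the numerical constants cancel to give $3^{1/4}$ in the denominator. (The factor of $m$ appearing where one might naively expect $n$ is a feature of the problem: $m$ ranges up to $\sim n/2$, so these are comparable.)

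The main obstacle is bookkeeping rather than conceptual: one must quote Meinardus's theorem in a form precise enough to extract both the constant $3^{1/4}$ and the error exponent $-1/4$, and verify that "$n\to\infty$ through values with $n-2m$ triangular" is compatible with "$m\to\infty$" — indeed, for each fixed $m$ the admissible $n$ are exactly $2m + \ell(\ell+1)/2$ for $\ell \ge 0$, an infinite set, but the asymptotic in the lemma is stated for \emph{fixed} $m$ as $n\to\infty$, so in fact only the $p(n)$ factor varies and the $a(m)$ factor is a constant; I would therefore read the lemma as: for fixed $m$, $p_2(m,n)/p(n) \sim a(m)/p(n)$, and then the displayed right-hand side is just the Hardy--Ramanujan asymptotic of $1/p(n)$ times the Meinardus asymptotic value of $a(m)$, with $\widetilde\beta_2(m)$ absorbing the relative error between $a(m)$ and its Meinardus approximation. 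This makes the proof short, and the only care needed is the constant-chasing in Meinardus's formula.
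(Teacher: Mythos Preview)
Your proposal is correct and follows essentially the same approach as the paper: invoke Proposition~\ref{prop:explicit_coefficients_t_2} to reduce $p_2(m,n)/p(n)$ to $a(m)/p(n)$, then apply Meinardus's asymptotic for $a(m)$ and the Hardy--Ramanujan formula for $p(n)$ and simplify. The paper's proof is terser and simply cites these two asymptotics without the detailed constant-chasing you outline, but the content is the same.
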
 

\begin{proof}
A result of Meinardus \cite[Theorem 6.2]{And84} gives us that \[a(m) =
\frac{3^{1/4}}{12 n^{5/4}}\exp\left( 2 \pi \sqrt{\frac{n}{3}} \right) (1 + \widetilde{\beta}_2(m)). \]
Moreover, the famous Hardy-Ramanujan asymptotic formula \cite{HR18} gives us \[p(n) \sim \frac{1}{4n\sqrt{3}}\exp\left(\pi\sqrt{\frac{2n}{3}}\right)\] as $n \to \infty$, whence the desired result follows after using $p_2(m,n) = a(m)$ from Proposition \ref{prop:explicit_coefficients_t_2} and simplifying.
\end{proof}

Despite the apparent continuity of $f_{2;n}(x)$ as $n \to \infty$ suggested by Lemma \ref{lem:meinardus}, we show below in the proof of Theorem \ref{intro_thm:pmf_no_converge} that the continuous approximations $h_{2;n}(x)$ still fail to converge to $g_2(x)$.

\begin{proof}[Proof of Theorem \ref{intro_thm:pmf_no_converge}] Observe that 
\begin{align*}
    f_{2;n}(x) =  \frac{\sqrt{3n}}{2\pi} \, \frac{p_2\left(\frac{n}{2} - \frac{\sqrt{3n}}{2\pi}x, n\right)}{p(n)}.
\end{align*}
As a quick remark, Proposition \ref{prop:explicit_coefficients_t_2} tells us that $f_{2;n}(x) \neq 0$ if and only if $\frac{\sqrt{3n}}{\pi}x$ is a triangular number. In line with our result from Lemma \ref{lem:meinardus}, the choice 
\[
h_{2;n}(x) := \frac{3^{1/4}n^{3/2} \exp \left(\pi \sqrt{\frac{2}{3}} \left( \sqrt{n - \frac{\sqrt{3n}}{\pi}x} - \sqrt{n}\right) \right)}{2 \pi\left( \frac{n}{2} - \frac{\sqrt{3n}}{2\pi}x \right)^{5/4}}.
\] satisfies the conditions in the theorem statement. On the other hand, for any fixed $x \in \R$,
\[
\exp \left(\pi \sqrt{\frac{2}{3}} \left( \sqrt{n - \frac{\sqrt{3n}}{\pi}x} - \sqrt{n}\right) \right) = \Omega(1)
\] 
as $n \to \infty$, so it follows that $h_{2;n}(x) \to \infty$ as $n \to \infty$, concluding the proof.
\end{proof}

\subsection{Hook Lengths Divisible by Three}\label{sec:three_hl} In this subsection, we find an explicit formula for the coefficients of the polynomial $P_3(n;x)$ and prove Theorems \ref{intro_thm:support_t3} and \ref{intro_thm:pmf_converge_t3}.

\begin{prop}\label{prop:explicit_coefficients_t_3}
In the case $t = 3$, the coefficients are given by \[p_3(m,n) = b(m)c(n-3m)\] where $b(k)$ denotes the number of $3$-colored partitions of $k$, \[c(k) := \sum_{d |3k+1} \left(\frac{d}{3}\right),\] and $\left(\frac{a}{p}\right)$ denotes the Legendre symbol.
\end{prop}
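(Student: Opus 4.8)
The plan is to mirror the proof of Proposition~\ref{prop:explicit_coefficients_t_2}. Specializing \eqref{eqn:tcolored_gf} at $y=xq^3$ gives $\prod_{k\geq 1}(1-(xq^3)^k)^{-3}=\sum_{k\geq 0}b(k)\,x^kq^{3k}$, where $b(k)$ is the number of $3$-colored partitions of $k$. Substituting this into Han's identity \eqref{generating_function} with $t=3$ produces the factorization
\[
G_3(x;q)=\left(\sum_{k\geq 0}b(k)\,x^kq^{3k}\right)\prod_{n\geq 1}\frac{(1-q^{3n})^3}{1-q^n}.
\]
Only the $k=m$ term of the first factor contributes a power $x^m$, so comparing the coefficient of $x^mq^n$ on both sides gives $p_3(m,n)=b(m)\cdot[q^{\,n-3m}]\prod_{n\geq 1}\frac{(1-q^{3n})^3}{1-q^n}$, and when $n-3m<0$ this bracket vanishes, consistently with $p_3(m,n)=0$. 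Hence the Proposition is equivalent to the single identity
\[
\prod_{n\geq 1}\frac{(1-q^{3n})^3}{1-q^n}=\sum_{k\geq 0}\left(\sum_{d\mid 3k+1}\left(\frac{d}{3}\right)\right)q^k .
\]

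The product on the left is the classical generating function for the number $a_3(k)$ of $3$-core partitions of $k$: from the unique factorization of a partition into its $3$-core and $3$-quotient one obtains $\sum_{k}p(k)q^k=\big(\sum_k a_3(k)q^k\big)\prod_{n\geq 1}(1-q^{3n})^{-3}$ (the last factor being the generating function for $3$-tuples of partitions weighted by thrice their total size), and solving for the first factor yields exactly the product above — this is also just the $x=1$ specialization of \eqref{generating_function}, rearranged. So it remains to prove $a_3(k)=\sum_{d\mid 3k+1}\left(\frac{d}{3}\right)$, the classical evaluation of the $3$-core counting function, which can be quoted directly. For a self-contained argument I would invoke the Garvan--Kim--Stanton parametrization of $3$-cores of $k$ by lattice vectors $(n_0,n_1,n_2)\in\Z^3$ with $n_0+n_1+n_2=0$ and $\tfrac32(n_0^2+n_1^2+n_2^2)+n_1+2n_2=k$. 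Eliminating $n_0=-(n_1+n_2)$ and completing the square turns this condition into $A^2+AB+B^2=3k+1$ with $(A,B)=(3n_1,\,3n_2+1)$; thus $3$-cores of $k$ are in bijection with the pairs $(A,B)\in\Z^2$ satisfying $A\equiv 0$, $B\equiv 1\pmod 3$ and $A^2+AB+B^2=3k+1$.

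To count these pairs, recall the classical divisor formula for the hexagonal lattice: the number of representations of an integer $m\geq 1$ by $x^2+xy+y^2$ equals $6\sum_{d\mid m}\left(\frac{d}{3}\right)$, and the group of proper automorphisms of the form, cyclic of order $6$, acts freely on the set of representations, splitting it into $\sum_{d\mid m}\left(\frac{d}{3}\right)$ orbits of size $6$. Taking $m=3k+1$, so that $3\nmid m$, every representation $(x,y)$ reduces mod $3$ to one of the six residue classes with $x^2+xy+y^2\equiv 1\pmod 3$, and the order-$6$ automorphism group permutes these six classes simply transitively; hence each orbit contains exactly one representation with $(x,y)\equiv(0,1)\pmod 3$. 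Therefore the number of admissible $(A,B)$ equals the number of orbits, namely $\sum_{d\mid 3k+1}\left(\frac{d}{3}\right)$, which is the claimed value of $a_3(k)$ and completes the argument.

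The main obstacle is precisely this identity $a_3(k)=\sum_{d\mid 3k+1}\left(\frac{d}{3}\right)$; the opening generating-function manipulation is routine and entirely parallel to the $t=2$ case. Within the identity, the two delicate points are (i) performing the completion of the square so that the congruences $A\equiv 0$, $B\equiv 1\pmod 3$ emerge cleanly, and (ii) checking that the six unit-translates of a representation of an integer $m\equiv 1\pmod 3$ hit the six admissible residue classes mod $3$ exactly once each --- this is where $3\nmid m$ is used, since it forbids $(x,y)\equiv(0,0)\pmod 3$ and so makes the order-$6$ action free both on the representations and on their reductions mod $3$. If one prefers to avoid lattice-point bookkeeping, an alternative is to note that $q^{1/3}\prod_{n\geq 1}\frac{(1-q^{3n})^3}{1-q^n}=\eta(3\tau)^3/\eta(\tau)$ is a weight-one modular form of level $9$, identify it with the essentially unique weight-one Eisenstein series in that space (whose Fourier coefficients are the divisor sums in question), and verify the agreement on finitely many coefficients via a Sturm bound.
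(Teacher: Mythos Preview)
Your proof is correct and follows the same overall strategy as the paper: factor $G_3(x;q)$ as the product of the $3$-colored partition generating function and the $3$-core generating function $\prod_{n\geq 1}(1-q^{3n})^3/(1-q^n)$, then identify the coefficients of the latter with the divisor sums $c(k)$. The paper's proof, however, simply cites Han--Ono \cite{HO11} for the identity $\prod_{m\geq 1}\frac{(1-q^{3m})^3}{1-q^m}=\sum_{k\geq 0}c(k)q^k$ and stops there. You instead supply a self-contained derivation of $a_3(k)=\sum_{d\mid 3k+1}\left(\frac{d}{3}\right)$ via the Garvan--Kim--Stanton parametrization, the change of variables $(A,B)=(3n_1,3n_2+1)$ reducing to $A^2+AB+B^2=3k+1$, and the orbit count under the order-$6$ automorphism group of the form; you also note the modular-forms alternative via a Sturm bound. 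Your route buys independence from \cite{HO11} and makes transparent why the Legendre-symbol divisor sum appears, at the cost of the lattice bookkeeping you flag in points (i)--(ii); the paper's route is shorter but treats the identity as a black box. Both are valid, and your argument is a legitimate alternative proof of the same proposition.
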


\begin{proof}
Similar to the proof of Proposition \ref{prop:explicit_coefficients_t_2}, first notice that \[\prod_{m=1}^\infty \frac{1}{(1-(xq^3)^m)^3} = \sum_{k=0}^\infty b(k)x^kq^{3k},\] where $b(k)$ is the number of $3$-colored partitions of $n$. On the other hand, Han--Ono \cite{HO11} showed that \[\prod_{m=1}^\infty \frac{(1-q^{3m})^3}{1-q^m} = \sum_{k=0}^\infty c(k)q^k,\] where $c(k)$ is defined as in the statement of the proposition. We obtain the proposition by computing the coefficient of $x^mq^n$ in (\ref{generating_function}): \[G_3(x;q) = \prod_{m=1}^\infty \frac{(1-q^{3m})^3}{(1-(xq^3)^m)^3(1-q^m)} = \left(\sum_{k=0}^\infty b(k)x^kq^{3k}\right)\left(\sum_{k=0}^\infty c(k)q^k\right). \qedhere\]
\end{proof}

The properties of the coefficients $c(k)$ were studied in depth in \cite{HO11}. Their results allow us to explicitly characterize the nonzero coefficients and compute the degree of $P_3(n;x)$. 

\begin{cor}\label{cor:zero_coeffs_t_3}
We have $p_3(m,n) = 0$ if and only if $c(n-3m) = 0$. In particular, $p_3(m,n) = 0$ if and only if there exists a prime $r \equiv 2\pmod{3}$ such that $\operatorname{ord}_r(3(n-3m)+1)$ is odd.
\end{cor}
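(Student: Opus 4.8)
The plan is to reduce everything to properties of the single arithmetic function $c(k)$, using Proposition \ref{prop:explicit_coefficients_t_3} as the bridge. From $p_3(m,n) = b(m)c(n-3m)$, the first observation is that $b(m) \neq 0$ for every nonnegative integer $m$: indeed $b(m)$ counts $3$-colored partitions of $m$, and since even the ordinary partition function satisfies $p(m) \geq 1$ (the empty partition covers $m=0$), we have $b(m) \geq 1$ always. Hence $p_3(m,n) = 0$ if and only if $c(n-3m) = 0$, which is the first assertion. Note one should also check $n - 3m \geq 0$ on the relevant range, but this is automatic since $p_3(m,n)$ is only meaningful for $0 \leq m \leq \lfloor n/3 \rfloor$.

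For the second assertion, the key input is the identity $\prod_{m \geq 1}(1-q^{3m})^3/(1-q^m) = \sum_{k \geq 0} c(k)q^k$ from Han--Ono \cite{HO11}, together with the fact (the CM/complex-multiplication structure alluded to in the introduction) that this $q$-series is, up to the standard $q \mapsto q^{?}$ normalization, a Hecke eigenform, so that $c(k)$ is a multiplicative-type function whose values are governed by the factorization $3k+1 = \prod r^{a_r}$. Concretely, from the definition $c(k) = \sum_{d \mid 3k+1}\left(\tfrac{d}{3}\right)$ and multiplicativity of $k \mapsto \sum_{d\mid k}\left(\tfrac{d}{3}\right)$ in the argument $3k+1$, one computes the local factor at a prime $r$: if $r = 3$ it does not divide $3k+1$; if $r \equiv 1 \pmod 3$ then $\left(\tfrac{r}{3}\right) = 1$ and the local factor is $a_r + 1 > 0$; if $r \equiv 2 \pmod 3$ then $\left(\tfrac{r}{3}\right) = -1$ and the local factor is $1 - 1 + 1 - \cdots \pm 1$, which equals $1$ if $a_r$ is even and $0$ if $a_r$ is odd. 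Therefore the product vanishes precisely when some prime $r \equiv 2 \pmod 3$ divides $3k+1$ to an odd power, i.e. $c(k) = 0 \iff \operatorname{ord}_r(3k+1)$ is odd for some $r \equiv 2 \pmod 3$. Applying this with $k = n - 3m$ gives the claim.

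The main obstacle — or rather the main thing requiring care — is the multiplicativity step: one must justify that $k \mapsto c(k)$, defined via the divisor sum over $3k+1$, has the stated multiplicative local structure. The cleanest route is to invoke directly the results of Han--Ono \cite{HO11}, who identify $\sum c(k)q^k$ with (the relevant normalization of) a weight-one CM cusp form and record exactly which $c(k)$ vanish; this is presumably the content cited as ``the multiplicative properties of the Fourier coefficients of a cusp form with complex multiplication'' in the introduction. Alternatively, one can argue elementarily: the function $N \mapsto \sum_{d \mid N}\left(\tfrac{d}{3}\right)$ is multiplicative in $N$ because $\left(\tfrac{\cdot}{3}\right)$ is a completely multiplicative Dirichlet character, so it suffices to evaluate it at prime powers $r^{a}$, which gives $\sum_{j=0}^{a}\left(\tfrac{r}{3}\right)^{j}$ and hence the case analysis above; one then sets $N = 3k+1$ (which is never divisible by $3$, so the character values at its prime divisors are $\pm 1$). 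Either way the vanishing criterion falls out, and the corollary follows by substitution.
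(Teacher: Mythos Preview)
Your proof is correct and follows the same overall structure as the paper's: use Proposition \ref{prop:explicit_coefficients_t_3} to write $p_3(m,n) = b(m)c(n-3m)$, observe that $b(m) \geq 1$ for all $m \geq 0$, and then analyze when $c(k)$ vanishes. The paper's own proof is a single sentence that simply cites \cite[Theorem 1.1]{HO11} for the vanishing criterion of $c(k)$, whereas you supply the elementary argument directly: the function $N \mapsto \sum_{d \mid N}\left(\tfrac{d}{3}\right)$ is multiplicative because the Legendre symbol is completely multiplicative, and evaluating at prime powers $r^a$ gives $\sum_{j=0}^{a}\left(\tfrac{r}{3}\right)^j$, which is $a+1 > 0$ when $r \equiv 1 \pmod 3$ and $\tfrac{1 + (-1)^a}{2}$ when $r \equiv 2 \pmod 3$. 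Your route is more self-contained and makes clear that no modular-forms machinery is actually needed for this particular corollary (the CM structure of the associated cusp form, while mentioned in the paper's surrounding discussion, is only really used later for Corollary \ref{cor:vanishing_coefficients_t_3}). The paper's approach has the virtue of brevity and of pointing the reader to the source where the broader theory is developed.
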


\begin{proof}
This corollary is an immediate consequence of \cite[Theorem 1.1]{HO11} and the fact that $b(k) \neq 0$ for any nonnegative integer $k$. 
\end{proof}

\begin{cor} \label{cor:deg_P_3}
The polynomial $P_3(n, x)$ has degree $\lfloor n/3 \rfloor$.
\end{cor}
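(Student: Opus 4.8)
The plan is to mirror the argument for Corollary \ref{cor:deg_p_2}. By Proposition \ref{prop:explicit_coefficients_t_3}, we have $p_3(m,n) = b(m)c(n-3m)$, and since $b(k) \neq 0$ for all nonnegative integers $k$, the nonvanishing of $p_3(m,n)$ hinges entirely on whether $c(n-3m) \neq 0$. The first step is to note that for $m > \lfloor n/3 \rfloor$ we have $n - 3m < 0$, so $c(n-3m)$ is not even defined (or is zero by convention), giving $\deg P_3(n,x) \leq \lfloor n/3 \rfloor$ immediately. The remaining work is to show the coefficient of $x^{\lfloor n/3 \rfloor}$ is actually nonzero, i.e. that $c(n - 3\lfloor n/3 \rfloor) \neq 0$.

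For the lower bound, write $n - 3\lfloor n/3 \rfloor = \rho$, where $\rho \in \{0,1,2\}$ is the residue of $n$ modulo $3$. I would then invoke Corollary \ref{cor:zero_coeffs_t_3}: $c(\rho) = 0$ if and only if there is a prime $r \equiv 2 \pmod 3$ with $\operatorname{ord}_r(3\rho+1)$ odd. The three cases to check are $3\rho + 1 \in \{1, 4, 7\}$ corresponding to $\rho = 0, 1, 2$. For $\rho = 0$: $3\rho+1 = 1$ has no prime factors, so $c(0) \neq 0$ (indeed $c(0) = 1$). For $\rho = 1$: $3\rho+1 = 4 = 2^2$, and $\operatorname{ord}_2(4) = 2$ is even, so $c(1) \neq 0$. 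For $\rho = 2$: $3\rho + 1 = 7$, which is prime and $7 \equiv 1 \pmod 3$, so the condition on primes $r \equiv 2 \pmod 3$ is vacuously satisfied, giving $c(2) \neq 0$. In all three cases $p_3(\lfloor n/3\rfloor, n) = b(\lfloor n/3 \rfloor) c(\rho) \neq 0$, so $\deg P_3(n,x) = \lfloor n/3 \rfloor$.

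I do not anticipate a real obstacle here; the only mild subtlety is making sure the edge cases $n \equiv 0, 1, 2 \pmod 3$ are all handled, and confirming that $c(0) \neq 0$ from the definition $c(k) = \sum_{d \mid 3k+1}\left(\frac{d}{3}\right)$ (the sum over $d \mid 1$ is just the single term $\left(\frac{1}{3}\right) = 1$). Alternatively, one could extract $c(0) = c(1) = 1$ and $c(2) = 1$ directly by expanding the product $\prod_{m \geq 1}(1-q^{3m})^3/(1-q^m) = 1 + q + q^2 + \cdots$ to low order, which sidesteps Corollary \ref{cor:zero_coeffs_t_3} entirely and is perhaps the cleanest route to present.
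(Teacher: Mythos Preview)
Your argument is correct and matches the paper's proof: both reduce to checking that $c(\rho)\neq 0$ for $\rho=n-3\lfloor n/3\rfloor\in\{0,1,2\}$, and the paper simply records $c(0)=1$, $c(1)=1$, $c(2)=2$ directly. One small slip: in your alternative expansion you write $c(2)=1$, but $\prod_{m\ge 1}(1-q^{3m})^3/(1-q^m)=1+q+2q^2+\cdots$ (equivalently, $\sum_{d\mid 7}\bigl(\tfrac{d}{3}\bigr)=1+1=2$), so $c(2)=2$; this does not affect the nonvanishing conclusion.
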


\begin{proof}
Observe that $n - 3\lfloor n/3 \rfloor$ is either $0$, $1$, or $2$, and $c(0) = 1$, $c(1) = 1$, $c(2) = 2$. 
\end{proof}

\begin{cor}
If $3(n - 3m)+1$ is prime, then $p_3(m,n) = 2b(m)$. 
\end{cor}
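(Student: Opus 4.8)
The plan is to invoke Proposition \ref{prop:explicit_coefficients_t_3} directly, which gives $p_3(m,n) = b(m)\,c(n-3m)$ with $c(k) = \sum_{d \mid 3k+1}\left(\frac{d}{3}\right)$. Since $b(m)$ is always a positive integer, the entire content of the statement is the claim that $c(n-3m) = 2$ whenever $r := 3(n-3m)+1$ is prime, so the proof reduces to a short Legendre symbol computation.

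To establish this, I would first record that the hypothesis forces $n - 3m \geq 1$ — in fact $n-3m \geq 2$, since $3\cdot 1 + 1 = 4$ is not prime — so that $r \geq 7$ is an odd prime with $r \equiv 1 \pmod 3$. In particular we are not in the degenerate case $c(0) = 1$. The divisors of $r$ are exactly $1$ and $r$, so the defining sum collapses to
\[
c(n-3m) = \left(\frac{1}{3}\right) + \left(\frac{r}{3}\right).
\]
Now $\left(\frac{1}{3}\right) = 1$ because $1$ is a quadratic residue mod $3$, and since $r \equiv 1 \pmod 3$ we likewise have $\left(\frac{r}{3}\right) = \left(\frac{1}{3}\right) = 1$. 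Hence $c(n-3m) = 2$, and plugging back in yields $p_3(m,n) = 2b(m)$.

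There is essentially no obstacle here; the only point that deserves a moment's care is noting that primality of $3(n-3m)+1$ is incompatible with $n-3m = 0$, so that $c$ is genuinely being evaluated from a two-element divisor set. Everything else follows from the observation, already implicit in \cite{HO11}, that for a prime $p \equiv 1 \pmod 3$ both divisors of $p$ lie in the class $1 \pmod 3$ and so each contributes $+1$ to the character sum.
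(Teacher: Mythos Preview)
Your proof is correct and matches the paper's approach exactly: invoke Proposition~\ref{prop:explicit_coefficients_t_3} and then compute $c(n-3m) = \left(\tfrac{1}{3}\right) + \left(\tfrac{3(n-3m)+1}{3}\right) = 1 + 1 = 2$ using that a prime of the form $3k+1$ is congruent to $1 \pmod 3$. The extra remark ruling out $n-3m \in \{0,1\}$ is a nice bit of hygiene but not essential, since the primality hypothesis already excludes those cases.
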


\begin{proof}
If $3(n - 3m)+1$ is prime, then \[c(n-3m) = \left(\frac{1}{3}\right) + \left(\frac{3(n-3m)+1}{3}\right) = 2. \qedhere\] 
\end{proof}

We now seek an asymptotic upper bound on the proportion of nonzero coefficients in $P_3(n;x)$ as $n \to \infty$. Before proceeding, we establish some notation from \cite{HO11}. Consider the series \[D(q) := \prod_{m=1}^\infty (1-q^m)^8 :=  \sum_{k=0}^\infty d(k)q^k.\] To further study the coefficients $d(k)$, Han--Ono \cite{HO11} noted that the \textit{renormalized series} \[\mathcal{D}(q) := \sum_{n = 1}^\infty d^*(n) q^n := \sum_{n = 0}^\infty d(n)q^{3n + 1}\] arises as the $q$-expansion of a modular form belonging to $S_4(\Gamma_0(9))$, the space of weight $4$ cusp forms on $\Gamma_0(9)$. They used the fact that $\dim S_4(\Gamma_0(9)) = 1$ to show that $\mathcal{D}(q)$ is in fact a normalized Hecke eigenform, and thus deduce general multiplicative properties of its Fourier coefficients $d(k)$. Moreover, by relating the renormalized series \[C(q) := \sum_{k=0}^\infty c(k)q^{3k+1}\] the norm form on the ring of integers of the imaginary quadratic field $\Q(\sqrt{-3})$, they derived the formula for $c(k)$ in the statement of Proposition \ref{prop:explicit_coefficients_t_3} and in turn showed that $c(k) = 0$ if and only $d(k) = 0$ for all nonnegative integers $k$. Thus, in light of Corollary \ref{cor:zero_coeffs_t_3}, we can quantify the rate at which the support of $\hat{Y}_3(n)$ vanishes by exploiting the multiplicative properties of $d(k)$. 

Recall that a set $E \subset \mathbb{Z}^{+}$ is \textit{multiplicative} if for relatively prime integers $n_1, n_2 \in \mathbb{Z}^{+}$, we have $n_1 n_2 \in E$ if and only if $n_1 \in E$ or $n_2 \in E$. We use the aforementioned properties of $d(k)$ in tandem with \cite[Theorem 2.4(a)]{Ser75} on the sizes of complements of multiplicative sets to estimate the proportion of nonzero coefficients of $P_3(n,x)$. In the same spirit as Corollary \ref{cor:vanishing_coefficients_t_2}, the following result allows us to say that ``almost every" coefficient of $P_{3}(m,n)$ is equal to zero. 

\begin{cor}\label{cor:vanishing_coefficients_t_3}
As $n \to \infty$, \[\frac{\#\{0 \leq m \leq \deg P_3(n, x) \ | \ p_3(m,n) \neq 0\}}{\deg P_3(n,x)} = O\left(\frac{1}{\sqrt{\log(n)}}\right).\]

In particular, \[\lim_{n \to \infty} \frac{\#\{0 \leq m \leq \deg P_3(n, x) \ | \ p_3(m,n) = 0\}}{\deg P_3(n,x)} = 1.\]
\end{cor}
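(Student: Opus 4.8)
The plan is to reduce the statement to a counting problem about the support of the arithmetic function $c(k)$, and then invoke Serre's estimate on complements of multiplicative sets. By Corollary \ref{cor:zero_coeffs_t_3} together with Corollary \ref{cor:deg_P_3}, we have $p_3(m,n) \neq 0$ if and only if $c(n-3m) \neq 0$, and as $m$ ranges over $\{0, 1, \ldots, \lfloor n/3 \rfloor\}$ the quantity $k = n - 3m$ ranges over a subset of $\{0, 1, \ldots, n\}$ of size $\lfloor n/3 \rfloor + 1$, hitting exactly one residue class modulo $3$. Hence
\[
\#\{0 \leq m \leq \deg P_3(n,x) \mid p_3(m,n) \neq 0\} \leq \#\{0 \leq k \leq n \mid c(k) \neq 0\}.
\]
Since $\deg P_3(n,x) = \lfloor n/3 \rfloor \asymp n$, it suffices to show that $\#\{0 \leq k \leq n \mid c(k) \neq 0\} = O(n/\sqrt{\log n})$.

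First I would recall from the discussion preceding the corollary (following \cite{HO11}) that $c(k) = 0$ if and only if $d(k) = 0$, where $d(k)$ are the coefficients of $\prod_{m \geq 1}(1-q^m)^8$, and that the renormalized series $\mathcal{D}(q) = \sum_{n \geq 0} d(n) q^{3n+1}$ is a normalized Hecke eigenform in $S_4(\Gamma_0(9))$. Because $\mathcal{D}$ is a Hecke eigenform with integer coefficients, the set
\[
E := \{N \in \Z^+ \mid N \equiv 1 \pmod 3,\ d^*(N) \neq 0\} = \{3k+1 \mid c(k) \neq 0\}
\]
is a multiplicative set in the sense defined in the excerpt: multiplicativity of the Fourier coefficients $d^*(n_1 n_2) = d^*(n_1) d^*(n_2)$ for coprime $n_1, n_2$ gives $n_1 n_2 \in E \iff n_1 \in E \text{ or } n_2 \in E$ once one checks the local behavior at primes dividing the level. (The congruence condition mod $3$ is automatically respected since $3 \nmid N$ for all $N$ with $d^*(N) \neq 0$.) Next I would identify which primes $r$ lie outside $E$ — namely, by the explicit formula for $c(k)$ coming from the norm form on $\Q(\sqrt{-3})$, or directly from Corollary \ref{cor:zero_coeffs_t_3}, the primes $r \equiv 2 \pmod 3$ have $d^*(r) = 0$, so they are not in $E$. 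This is a set of primes of positive density ($1/2$ among primes coprime to $3$, i.e. Dirichlet density $1/2$).

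Then I would apply \cite[Theorem 2.4(a)]{Ser75}: if the set of primes \emph{not} in a multiplicative set $E$ has positive density $\alpha > 0$, then the counting function of $E$ satisfies $\#\{N \leq x \mid N \in E\} = O\!\left(x / (\log x)^{\alpha}\right)$. With $\alpha = 1/2$ this yields $\#\{N \leq x \mid N \in E\} = O(x/\sqrt{\log x})$. Translating back via $N = 3k+1$, we get $\#\{0 \leq k \leq n \mid c(k) \neq 0\} = O(n/\sqrt{\log n})$, and dividing by $\deg P_3(n,x) \asymp n$ gives the first claim. The second claim (density of zeros tends to $1$) is then immediate. The main obstacle — really the only nonroutine point — is verifying that $E$ genuinely satisfies the multiplicative-set hypothesis of Serre's theorem and correctly pinning down the density of the excluded primes; this rests entirely on the Hecke-eigenform structure and the explicit CM description of $c(k)$ already established in \cite{HO11}, so it amounts to citing those results carefully rather than proving anything new.
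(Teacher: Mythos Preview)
Your overall strategy---reduce to counting $k$ with $c(k)\neq 0$, pass to the Hecke eigenform $\mathcal{D}$, and invoke Serre's theorem with the density-$1/2$ set of primes $r\equiv 2\pmod 3$---is exactly the paper's approach. However, you have inverted the roles of the set and its complement. With your choice $E=\{N : d^*(N)\neq 0\}$, the multiplicativity $d^*(n_1n_2)=d^*(n_1)d^*(n_2)$ gives
\[
n_1n_2\in E \iff n_1\in E \ \text{\emph{and}}\ n_2\in E,
\]
not ``or''; so your $E$ is \emph{not} a multiplicative set in the sense defined in the paper, and your stated version of Serre's theorem (primes \emph{outside} $E$ of density $\alpha$ forces $|E\cap[1,x]|=O(x/(\log x)^\alpha)$) is not what \cite[Theorem 2.4]{Ser75} says. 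The paper instead takes $E=\{m : d^*(m)=0\}$, which genuinely satisfies the ``or'' condition; the primes \emph{in} this $E$ are exactly those $\equiv 2\pmod 3$, and Serre's theorem then bounds the \emph{complement} $\overline{E}(z)=\#\{\ell\le z : d^*(\ell)\neq 0\}=O(z/\sqrt{\log z})$. Swapping your $E$ with its complement (and dropping the unnecessary congruence restriction, since $d^*(N)=0$ automatically for $N\not\equiv 1\pmod 3$) makes your argument coincide with the paper's.
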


\begin{proof}
First consider the set $E := \{m \in \mathbb{Z}^{+} \ | \ d^*(m) = 0\},$ which consists of the integers $m$ where the renormalized coefficients $d^*(m)$ vanish. Moreover, consider the set $P$ consisting of all primes congruent to $2 \pmod{3}$. By \cite[Corollary 2.2]{HO11}, the set $E$ is multiplicative, and $P$ is precisely the set of primes in $E$. Due to a classical result of Dirichlet, the set $P$ has natural density $1/2$ in the natural numbers. Then, \cite[Theorem 2.4]{Ser75} tells us that for any $z > 0$, \[\overline{E}(z) := \# \{0 \leq \ell \leq z \ | \ \ell \not\in E \} = \#\{0 \leq \ell \leq z \ |\ d^*(\ell) \neq 0\} = O\left(\frac{z}{\sqrt{\log(z)}}\right).\] Thus, using the fact that $c(k) = 0$ if and only if $d(k) = 0$ (see \cite[Theorem 1.1]{HO11}), we deduce that
\begin{align*}
\# \{0 \leq m \leq \lfloor n/3 \rfloor \ | \ p_3(m, n) \neq 0 \} & \leq \# \{0 \leq k \leq n \ | \ c(k) \neq 0 \} \\[5pt] & = \# \{0 \leq k \leq n \ | \ d(k) \neq 0 \} \\[5pt]
& = \# \{0 \leq k \leq n \ | \ d^*(3k + 1) \neq 0 \} \\[5pt]
& \leq \# \{0 \leq l \leq 3n + 1 \ | \ d^*(l) \neq 0 \} \\[5pt]
& = O\left(\frac{n}{\sqrt{\log(n)}}\right).
\end{align*}
The desired results follow after dividing by $\deg P_3(n,x) = \lfloor n/3\rfloor \asymp n$. \qedhere
\end{proof}

\begin{proof}[Proof of Theorem \ref{intro_thm:support_t3}]
Combining the results in this subsection completes the proof of Theorem \ref{intro_thm:support_t3}. In particular, note that Theorem \ref{intro_thm:support_t3}(a) follows from Proposition \ref{cor:zero_coeffs_t_3}, while Theorem \ref{intro_thm:support_t3}(b) follows from Corollary \ref{cor:vanishing_coefficients_t_3}. Finally, Theorem \ref{intro_thm:support_t3}(c) follows from a similar argument as in the proof of Theorem \ref{intro_thm:support_t2}(c) in Section \ref{sec:even_hl}.
\end{proof}

In preparation for the proof of Theorem \ref{intro_thm:pmf_converge_t3}, we show that the values of $f_{3; n}(x)$ restricted to its support approximate multiple continuous curves using similar techniques as in Lemma \ref{lem:meinardus}.

\begin{lem}\label{lem:meinardus_t3}
We have \[\frac{p_3(m, n)}{p(n)} \sim  \frac{n\sqrt{3} \exp\left(\pi \sqrt{\frac{2}{3}} (\sqrt{3m} - \sqrt{n})\right)}{2^{3/2}m^{3/2}} \widetilde{\alpha}(m,n)(1 + \widetilde{\beta}_3(m))\] as $n \to \infty$ for all $m$ such that $f_{3; n}(m) \neq 0$, where $\widetilde{\beta}_3(m) = O_\varepsilon(m^{-1/4 + \varepsilon})$ for any $\varepsilon > 0$.
\end{lem}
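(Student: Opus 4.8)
The plan is to mimic the proof of Lemma \ref{lem:meinardus} but with the factorization $p_3(m,n) = b(m)\,c(n-3m)$ from Proposition \ref{prop:explicit_coefficients_t_3} in place of $p_2(m,n) = a(m)$. The asymptotic will have three ingredients: an asymptotic for $b(m)$, the Hardy--Ramanujan asymptotic for $p(n)$, and a bookkeeping factor $\widetilde{\alpha}(m,n)$ that absorbs the arithmetic weight $c(n-3m)$. Concretely, since $b(k)$ is the number of $3$-colored partitions of $k$, it has generating function $\prod_{m\ge1}(1-y^m)^{-3}$, which is a Meinardus-type product with parameters falling under \cite[Theorem 6.2]{And84}; applying Meinardus' theorem to this product yields an asymptotic of the shape $b(m) \sim C\, m^{-\kappa}\exp(2\pi\sqrt{m/2})$ for explicit constants $C,\kappa$, together with an error term $O_\varepsilon(m^{-1/4+\varepsilon})$ which I will call $\widetilde\beta_3(m)$. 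The constant works out so that $2\pi\sqrt{m/2} = \pi\sqrt{2/3}\cdot\sqrt{3m}\cdot\sqrt{?}$— one must check the exponent matches $\pi\sqrt{2/3}\sqrt{3m}$ in the statement, which pins down the precise Meinardus normalization to use.

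The key steps, in order, would be: (1) invoke Meinardus on $\prod(1-y^m)^{-3}$ to get $b(m) \sim C m^{-\kappa} \exp(\pi\sqrt{2/3}\sqrt{3m})(1+\widetilde\beta_3(m))$, reading off $C = \sqrt{3}/2^{3/2}$ and $\kappa$ from the statement (and verifying them against the Meinardus formula); (2) recall the Hardy--Ramanujan asymptotic $p(n) \sim \tfrac{1}{4n\sqrt3}\exp(\pi\sqrt{2n/3})$ as in Lemma \ref{lem:meinardus}; (3) divide, using $p_3(m,n) = b(m)c(n-3m)$, so that the exponential becomes $\exp(\pi\sqrt{2/3}(\sqrt{3m}-\sqrt n))$, the power of $n$ from $1/p(n)$ contributes the factor $n$ (up to the $\sqrt3$ constants, which combine into the stated $\sqrt3/2^{3/2}$), and the leftover $c(n-3m)$ is exactly what is packaged into $\widetilde\alpha(m,n)$; (4) note that on the support of $f_{3;n}$ we have $c(n-3m)\ne 0$, so $\widetilde\alpha(m,n) := c(n-3m)$ is a positive integer and the statement is nonvacuous precisely where claimed. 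One should also confirm that the error term from $p(n)$ is lower order than $\widetilde\beta_3(m) = O_\varepsilon(m^{-1/4+\varepsilon})$, which holds because the Hardy--Ramanujan error is exponentially small relative to the main term for the relevant range of $n$ (here $m$ and $n$ are comparable up to the constraint $n-3m \ge 0$).

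The main obstacle I expect is twofold: first, correctly extracting the Meinardus asymptotic for the $3$-colored partition function with the exact constant $\sqrt3/2^{3/2}$ and exponent $m^{-3/2}$ appearing in the statement — this requires care with the Meinardus parameters ($a_n$, the associated Dirichlet series, its residue and value at $0$), and it is easy to be off by a power of $2$ or $3$ or by a factor of $\pi$; second, being precise about the meaning of ``as $n\to\infty$'' here, since $m$ is tied to $n$ through the support condition $f_{3;n}(m)\ne0$ and the relation $n-3m$ triangular-like (more precisely $c(n-3m)\ne 0$). In the $t=2$ lemma this was handled by saying ``through suitable values of $n$''; the analogous care is needed to ensure $m\to\infty$ whenever $n\to\infty$ along the support, so that the Meinardus asymptotic for $b(m)$ actually kicks in. Everything else is routine substitution and simplification, parallel to the proof of Lemma \ref{lem:meinardus}.
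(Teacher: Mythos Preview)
Your proposal is correct and follows exactly the paper's approach: invoke Proposition \ref{prop:explicit_coefficients_t_3} to write $p_3(m,n)=b(m)c(n-3m)$, set $\widetilde{\alpha}(m,n):=c(n-3m)$, apply Meinardus to $\prod(1-y^m)^{-3}$ to get $b(m)=\tfrac{1}{2^{7/2}m^{3/2}}\exp(\pi\sqrt{2m})(1+\widetilde{\beta}_3(m))$, divide by the Hardy--Ramanujan asymptotic for $p(n)$, and simplify using $\pi\sqrt{2m}=\pi\sqrt{2/3}\,\sqrt{3m}$. The only minor correction is that the Meinardus constant for $b(m)$ itself is $2^{-7/2}$ (with exponent $\kappa=3/2$); the $\sqrt{3}/2^{3/2}$ you mention arises only after combining with the $4n\sqrt{3}$ coming from $1/p(n)$.
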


\begin{proof} 
Recall from Proposition \ref{prop:explicit_coefficients_t_3} that $p_3(m,n) = b(m)c(n-3m)$, and set $\widetilde{\alpha}(m,n) := c(n-3m)$. A result of Meinardus \cite[Theorem 6.2]{And84} gives us that \[b(m) = \frac{1}{2^{7/2}m^{3/2}} \exp (\pi \sqrt{2m})(1 + \widetilde{\beta}_3(m)).\] As in the proof of Lemma \ref{lem:meinardus}, the desired result follows after applying the Hardy-Ramanujan asymptotic formula \cite{HR18} for $p(n)$ and simplifying. 
\end{proof}

\begin{xrem} 
We make a few comments on the values of the integers $\alpha(m,n) = c(n - 3m)$. First, for any integer $k$, note that the character sum \[c(k) = \sum_{d|3k+1} \left(\frac{d}{3}\right)\] is odd if and only if $k$ is a perfect square, as the Legendre symbol is completely multiplicative. As the perfect squares have natural density zero in the integers, it follows that $100\%$ of the scalars $\alpha(m,n)$ are even numbers in the limit of large $n$. Moreover, if we factor $k = r_1^2r_2,$ where $r_1$ is divisible only by primes equivalent to $2 \pmod{3}$ and $r_2$ is divisible only by primes equivalent to $1 \pmod{3}$, then $c(k)$ is precisely the number of divisors of $r_2$. In light of this fact, one can study the frequency with which $\alpha(m,n) = c(n-3m) = \ell$ for any positive integer $\ell$ by applying results on the density of the prime numbers.
\end{xrem}

In the case $t = 3$, we have so far shown that the nonzero points of $f_{3;n}(m)$ lie on integer multiples of the continuous function $h_{3;n}(m)$. When $t = 2$, Theorem \ref{intro_thm:pmf_no_converge} tells us that the continuous approximations $h_{2; n}(m)$ fail to converge to $g_2(m)$ as $n \to \infty$. In contrast, we now prove that $h_{3; n}(m)$ in fact converges to a scalar multiple of the density function $g_3(m)$. 

\begin{proof}[Proof of Theorem \ref{intro_thm:pmf_converge_t3}]
Observe that 
\[
f_{3;n}(x) = \frac{\sqrt{6n}}{3\pi} \, \frac{p_3\left( \frac{n}{3} - \frac{\sqrt{6n}}{3\pi}x, n \right)}{p(n)}.
\]
Corollary \ref{cor:zero_coeffs_t_3} tells us that $f_{3;n}(x) \neq 0$ if and only if $c\left(\frac{\sqrt{6n}}{\pi}x\right) = 0$. By Lemma \ref{lem:meinardus_t3}, the choice of 
\[
h_{3;n}(x) :=  \frac{3\sqrt{3}n^{3/2}\exp\left(\pi\sqrt{\frac{2}{3}}\left(\sqrt{n-\frac{\sqrt{6n}}{\pi}x} - \sqrt{n}\right)\right)}{2\pi\left(n - \frac{\sqrt{6n}}{\pi}x\right)^{3/2}}
\]
and $\alpha(x,n) := \widetilde{\alpha}\left(\frac{n}{3} - \frac{\sqrt{6n}}{3\pi}x, n\right)$ satisfies the conditions of the theorem. 

For statement (b), recall that $g_3(x) = e^{-x}$. For all $x \in \mathbb{R}$, applying the Taylor series expansion (in $x$) of \[n^{1/2} \left(1 - \frac{\sqrt{6}}{\sqrt{n} \pi}x\right)^{1/2}\] gives us
\begin{align*}
\lim_{n \to \infty} h_{3; n}(x) = \lim_{n \to \infty} \frac{3\sqrt{3}n^{3/2}\exp\left(\pi\sqrt{\frac{2}{3}}\left(\sqrt{n-\frac{\sqrt{6n}}{\pi}x} - \sqrt{n}\right)\right)}{2\pi\left(n - \frac{\sqrt{6n}}{\pi}x\right)^{3/2}} = \frac{3\sqrt{3}}{2 \pi} g_3(x), 
\end{align*} and the desired result follows.
\end{proof}

\section{Convergence in Distribution}\label{sec:cdfs}
In this section, we show that the cumulative distribution function of $\{\hat{Y}_t(n)\}$ converges to a shifted Gamma distribution for $t = 2, 3$, thereby fully resolving the question of limiting distributions of $\{\hat{Y}_t(n)\}$ for all $t \geq 2$. We proceed by using the generating function (\ref{generating_function}) to estimate values of the polynomials $P_t(n,x)$ using the saddle point method. We will then write the characteristic functions of the random variables $\hat{Y}_t(n)$ in terms of these polynomials and use these estimates to prove the desired pointwise convergence of the characteristic functions. 

The following proposition is our main tool for obtaining the estimates necessary to prove Theorem \ref{intro_thm:convergence_in_distribution}. Griffin--Ono--Tsai proved a similar result the case where $\alpha$ takes on strictly real values \cite[Proposition 2.2]{GOT22}, but we show that the result can be extended to the case where $\alpha$ takes on purely imaginary values. The proof of \cite[Proposition 2.2]{GOT22} can be readily adapted to prove our modified proposition, so our treatment of its proof will be rather terse. 

\begin{prop} \label{prop:formula_P(n, x)}
Fix $t = 2, 3$ and let $\alpha \in i\R$ be purely imaginary. Then, the following asymptotic estimate holds in the limit of large $n$: \[P_t\left(n,x_n\right) = \frac{1}{2^{7/4}3^{1/4}n}\sqrt{\frac{1}{\sqrt{6}} + \frac{\alpha}{\pi t}}\left(\frac{\pi t}{\pi t + \alpha\sqrt{6}}\right)^{t/2} \exp\left(\pi\sqrt{n}\left(\sqrt{\frac{2}{3}}+\frac{\alpha}{\pi t}\right)\right) \cdot (1 + O_\alpha(n^{-1/7})),\] where $x_n := \exp\left(\alpha/\sqrt{n}\right)$ for each positive integer $n$.
\end{prop}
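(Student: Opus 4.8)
The plan is to adapt the saddle point analysis of \cite[Proposition 2.2]{GOT22} to the case of a purely imaginary exponent $\alpha = i\beta$, $\beta \in \R$. Writing $x_n = e^{\alpha/\sqrt{n}}$, the quantity $P_t(n, x_n)$ is the coefficient of $q^n$ in the generating function $G_t(x_n; q)$ of \eqref{generating_function}, so by Cauchy's integral formula
\[
P_t(n, x_n) = \frac{1}{2\pi i}\oint \frac{G_t(x_n; q)}{q^{n+1}}\, dq,
\]
and I would parametrize $q = e^{-\tau}$ with $\tau$ near a positive real saddle $\tau_0 = \tau_0(n)$ to be determined. First I would record the asymptotic behavior of $\log G_t(x_n; e^{-\tau})$ as $\tau \to 0^+$. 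Each of the three Euler-type factors $\prod(1 - q^{tn})^t$, $\prod(1 - (x_n q^t)^n)^{-t}$, and $\prod(1-q^n)^{-1}$ contributes a leading term of the shape $c/\tau$ (via the standard $\sum \log(1 - e^{-k\tau}) \sim -\pi^2/(6\tau) + \tfrac12\log(\tau/2\pi) + \cdots$ estimate, i.e. the Mellin-transform/Meinardus input), where the middle factor picks up a shift from $x_n = e^{\alpha/\sqrt{n}}$: its contribution is governed by $\prod(1 - e^{-n(t\tau - \alpha/\sqrt n)})^{-t}$, so the relevant quantity is $t\tau - \alpha/\sqrt n$. Collecting, one gets
\[
\log G_t(x_n; e^{-\tau}) = \frac{\pi^2}{6\tau}\left(\frac{1}{t} + 1 - t\right)^{?}\ \cdots
\]
— more precisely, the $1/\tau$-coefficient combines to give a main term of the form $A/\tau$ plus a correction from the middle factor involving $t\tau - \alpha/\sqrt n$ in the denominator, and the saddle point equation $\frac{d}{d\tau}\big[\log G_t(x_n; e^{-\tau}) + n\tau\big] = 0$ then locates $\tau_0 \sim \frac{\pi}{\sqrt n}\big(\sqrt{2/3} + \tfrac{\alpha}{\pi t}\big)^{-1}\cdot(\text{const})$, matching the exponent $\pi\sqrt n(\sqrt{2/3} + \alpha/(\pi t))$ claimed in the statement once one plugs back in.

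The key steps, in order: (1) establish the Mellin-type asymptotic expansion for $\log G_t(x_n; e^{-\tau})$ valid uniformly for $\tau$ in a sector around $\tau_0$, keeping the subleading $\log \tau$ terms since they produce the polynomial prefactor $n^{-1}$ and the algebraic factors $\sqrt{1/\sqrt 6 + \alpha/(\pi t)}$ and $(\pi t/(\pi t + \alpha\sqrt 6))^{t/2}$; (2) solve the saddle point equation asymptotically for $\tau_0 = \tau_0(n,\alpha)$, checking that $\re \tau_0 > 0$ for all $\beta \in \R$ (this is where purely imaginary $\alpha$ is genuinely different from real $\alpha$ — one must verify the saddle stays in the right half-plane and that $\pi t + \alpha \sqrt 6 \neq 0$, which holds automatically since $\alpha \sqrt 6 \in i\R$); (3) perform the contour integral: the main arc through $\tau_0$ contributes the Gaussian integral $\int e^{-\frac12 \Phi''(\tau_0)(\tau - \tau_0)^2}d\tau = \sqrt{2\pi/\Phi''(\tau_0)}$, where $\Phi(\tau) = \log G_t(x_n;e^{-\tau}) + n\tau$ and $\Phi''(\tau_0) \asymp n^{3/2}$, which is the source of the remaining power of $n$ and ties together with the prefactor; (4) bound the tails — the arcs away from $\tau_0$ — showing they are exponentially smaller, and collect all error terms into $1 + O_\alpha(n^{-1/7})$.

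I expect the main obstacle to be step (4) together with the uniformity in step (1): bounding $|G_t(x_n; e^{-\tau})|$ on the complementary arcs when $\alpha$ is imaginary, because the factor $\prod(1 - (x_n q^t)^k)^{-t}$ no longer has a real positive base, so one cannot simply say the modulus is maximized on the real axis. One has to show that $|1 - (x_n q^t)^k|$ is bounded below appropriately when $q = e^{-\tau}$ with $\im \tau$ not too small, using that $|x_n| = 1$ and the minor-arc estimates for the Dedekind eta-type factors $\prod(1-q^n)^{-1}$ and $\prod(1-q^{tn})^t$, which already force exponential decay away from $\tau = 0$; the middle factor only needs to be controlled so as not to overwhelm that decay, and since $|x_n q^{tk}| = e^{-tk\re\tau} < 1$ this is a soft estimate. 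The computation in step (1)–(2) is the technical heart but is a routine (if lengthy) Meinardus-style computation, essentially identical to \cite{GOT22} with $\alpha$ real replaced by $\alpha$ imaginary throughout; since the excerpt explicitly permits a terse treatment citing the adaptability of their proof, I would present (1)–(2) compressed, do (3) in a line via the standard Laplace/saddle formula, and spend the actual detail on the sign/positivity check in (2) and the tail bound in (4), which are the only places the imaginary hypothesis on $\alpha$ changes anything.
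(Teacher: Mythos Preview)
Your plan is essentially the paper's own proof: both apply Cauchy's integral formula to $G_t(x_n;q)$, run the saddle point method with the Euler--Maclaurin/Mellin estimate $\sum \log(1-e^{-j\gamma}) = -\pi^2/(6\gamma) - \tfrac12\log(\gamma/2\pi) + O(\gamma)$, check that the saddle has positive real part (which is exactly where the purely imaginary hypothesis on $\alpha$ enters), and then defer the remaining analysis to \cite[Proposition~2.2]{GOT22} \emph{mutatis mutandis}. One small correction: the saddle is $\tau_0 = \rho_n = \big(\tfrac{\pi}{\sqrt{6}} + \tfrac{\alpha}{t}\big)n^{-1/2} + O_\alpha(n^{-1})$, not its reciprocal as you tentatively wrote---plugging this in directly gives the exponent $\pi\sqrt{n}\big(\sqrt{2/3} + \alpha/(\pi t)\big)$.
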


\begin{proof} 
Applying the Cauchy residue theorem to (\ref{generating_function}) gives us \[P_t(n,x_n) = \frac{1}{2\pi} \int_{-\pi}^\pi (ze^{i\tau})^{-n}G_t(x_n;ze^{i\tau})\ d\tau = \frac{1}{2\pi} \int_{-\pi}^{\pi} \exp(g_t(x_n;ze^{i\tau}))\ d\tau,\] where \begin{align*}g_t(x_n;w) &:= \mathrm{Log}(w^{-n}G_t(x_n;w))\\[10pt] &= -n\log(w) + \sum_{m=1}^\infty t\log(1-w^{tm}) - \sum_{m=1}^\infty t\log(1-(x_nw^t)^m) - \sum_{m=1}^\infty \log(1-w^m),\end{align*} provided $0 < |w| < 1$. To apply the saddle point method, we seek $\kappa\in \C$ with $0 < |\kappa| < 1$ such that \[g_t(x_n,\kappa) \neq 0, \quad \frac{d}{dw}\bigg|_{w=\kappa}[g_t(x_n;w)] = 0,\quad \frac{d^2}{dw^2}\bigg|_{w=\kappa}[g_t(x_n;w)]\neq 0.\] We set the derivative of $g_t(x_n;w)$ with respect to $w$ equal to zero and follow a line of reasoning identical to that in the proof of \cite[Proposition 2.2]{GOT22}. In particular, 
\begin{align*}
    \frac{d}{dw}g(x_n,w) = -\frac{m}{w} - \sum_{j=1}^\infty \frac{t^2j}{1 - w^{tj}}w^{tj-1} + \sum_{j=1}^\infty \frac{t^2j x_n^j}{1 - (x_nw^{j})^j} w^{tj-1} + \sum_{j=1}^\infty \frac{jw^{j-1}}{1 - w^{j}}
\end{align*}
gives 
\begin{align*}
    n = \sum_{j=1}^\infty \frac{t^2j}{w^{-tj} - 1} + \sum_{j=1}^\infty \frac{t^2j x_n^j}{w^{-tj} - x_n^j} + \sum_{j=1}^\infty \frac{j}{w^{-j} -1}.
\end{align*}
By the definition of $x_n$ and
\begin{align}
    \label{2.8analogue} \sum_{j=1}^\infty \frac{j}{e^{j \alpha} - 1} = \frac{\pi^2}{6\alpha^2} - \frac{1}{2\alpha} + O(1), 
\end{align}
we obtain the saddle point $\kappa = e^{-\rho_n}$, where \[\rho_n := \left(\frac{\pi}{\sqrt{6}} + \frac{\alpha}{t}\right)n^{-1/2} + O_\alpha(n^{-1}).\] In particular, notice that (for sufficiently large $n$) \[|\kappa| = |e^{-\rho_n}| = |e^{-\operatorname{Re}(\rho_n)}| < 1.\]
With this value of $\kappa$, we see that (again following the arguments in \cite[Proposition 2.2]{GOT22}) \begin{align*}
    g_t(x_n,\kappa) &= n\rho_n + \sum_{m=1}^\infty t\log(1-e^{-tm\rho_n}) - t\sum_{m=1}^\infty \log(1-x_n^me^{-tm\rho_n}) - \sum_{m=1}^\infty \log(1-e^{-m\rho_n}) \\[10pt] &=
    n\rho_n + \sum_{m=1}^\infty t\log(1-e^{-tm\rho_n}) - t\sum_{m=1}^\infty \log(1-e^{-m(t\rho_n - \alpha/\sqrt{n})}) - \sum_{m=1}^\infty \log(1-e^{-m\rho_n}).
\end{align*}
By applying Euler-Maclaurin summation in the style of \cite[Lemma 2.3]{GOT22}, we have \[\sum_{m=1}^\infty \log(1-e^{-j\gamma}) = -\frac{\pi^2}{6\gamma} - \frac{1}{2}\mathrm{log}\left(\frac{\gamma}{2\pi}\right) + O(\gamma)\] for any $\gamma \in \C$ with positive real part. Using this equality, we obtain the same estimate for $g_t(x_n,\kappa)$ as in the proof of \cite[Proposition 2.2]{GOT22}. In the same way as we have outlined so far, the rest of the proof follows \textit{mutatis mutandis} to the proof of \cite[Proposition 2.2]{GOT22}, using the facts that $\rho_n$ has positive real part and multiplication by $x_n$ does not affect absolute value. 
\end{proof}

With this proposition in place, we can now prove the convergence in distribution of $\{\hat{Y}_t(n)\}$ to a shifted Gamma distribution. Due to the following continuity theorem by L\'evy, it suffices to prove pointwise convergence of characteristic functions. 

\begin{thm}[L\'evy] \cite[Section 18.1]{Wil91}
Let $(X_n)$ be a sequence of random variables with characteristic functions given by $\varphi_n(r):= \mathbb{E}[e^{iX_nr}]$ and let $X$ be a random variable with characteristic function $\varphi(r) := \mathbb{E}[e^{iXr}]$. Suppose that
\[
\varphi(r) = \lim_{n \to \infty} \varphi_n(r)
\]
for all $r \in \R$ and that $\varphi$ is continuous at $r = 0$. Then $X_n$ converges to $X$ in distribution. 
\end{thm}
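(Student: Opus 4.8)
The plan is to prove the classical fact that pointwise convergence of characteristic functions, together with continuity of the limit at the origin, forces convergence in distribution. The argument rests on three largely independent ingredients: a tightness estimate that extracts decay of the tails of the $X_n$ from the behavior of the $\varphi_n$ near $r = 0$, the Helly--Prokhorov selection principle, and the uniqueness theorem for characteristic functions.

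First I would establish the key tail bound: for any real random variable $Y$ with characteristic function $\psi$ and any $u > 0$,
\[
\mathbb{P}\left(|Y| \geq \frac{2}{u}\right) \leq \frac{1}{u}\int_{-u}^{u}\bigl(1 - \psi(r)\bigr)\,dr.
\]
This follows by computing $\frac{1}{u}\int_{-u}^u (1 - e^{irY})\,dr = 2 - \frac{2\sin(uY)}{uY}$, taking expectations (the imaginary parts cancel since $\psi(-r) = \overline{\psi(r)}$), and noting that the resulting integrand is at least $1$ on the event $\{|uY| \geq 2\}$, because $\frac{\sin t}{t} \leq \frac{1}{2}$ for $|t| \geq 2$. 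Applying this to each $X_n$ and invoking the hypotheses, I would argue tightness as follows. Since $\varphi(0) = 1$ and $\varphi$ is continuous at $0$, given $\varepsilon > 0$ one can choose $u$ small enough that $\frac{1}{u}\int_{-u}^u (1 - \varphi(r))\,dr < \varepsilon/2$. Because $|\varphi_n| \leq 1$ and $\varphi_n \to \varphi$ pointwise, bounded convergence gives $\frac{1}{u}\int_{-u}^u(1 - \varphi_n(r))\,dr \to \frac{1}{u}\int_{-u}^u(1 - \varphi(r))\,dr$, so for all large $n$ the left-hand side is below $\varepsilon$; the finitely many remaining indices are absorbed by shrinking $u$ further. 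This yields a uniform tail bound, that is, tightness of the family $(X_n)$.

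With tightness in hand, the rest is soft. By the Helly selection theorem (equivalently, by Prokhorov's theorem), every subsequence of $(X_n)$ admits a further subsequence converging weakly to some probability law $\mu$. Along such a subsequence the characteristic functions converge to $\hat\mu$, as one sees by applying the definition of weak convergence to the bounded continuous test functions $x \mapsto \cos(rx)$ and $x \mapsto \sin(rx)$; but by hypothesis they also converge pointwise to $\varphi$, so $\hat\mu = \varphi$, which is the characteristic function of $X$. The uniqueness theorem for characteristic functions then forces $\mu$ to equal the law of $X$. Hence every weakly convergent subsequence of $(X_n)$ has the same limit, and a standard subsequence argument upgrades this to convergence of the full sequence, giving $X_n \Rightarrow X$.

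I expect the tightness step to be the crux of the argument. It is the only place where continuity of $\varphi$ at $0$ is genuinely used, and the theorem fails without that hypothesis, since mass can escape to infinity while the characteristic functions still converge pointwise to a limit discontinuous at the origin. The tail inequality above is precisely the mechanism that converts regularity of $\varphi$ at the single point $r = 0$ into uniform control of the tails of the $X_n$; everything downstream, namely the selection of subsequential limits and their identification, is routine.
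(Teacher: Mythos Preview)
Your argument is correct and is essentially the standard textbook proof of L\'evy's continuity theorem. However, the paper does not actually prove this statement: it is quoted as a classical result with a citation to \cite[Section 18.1]{Wil91} and used as a black box in the proof of Theorem~\ref{intro_thm:convergence_in_distribution}. So there is no ``paper's own proof'' to compare against; you have supplied a proof where the authors simply invoke the literature. Your write-up is fine as a self-contained justification, with the minor caveat that the phrase ``the finitely many remaining indices are absorbed by shrinking $u$ further'' could be made more precise (it works because each individual $X_n$ is tight on its own, so one may take a common $u$ small enough for those finitely many $n$ and then verify the large-$n$ estimate still holds for that $u$).
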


The following proof of Theorem \ref{intro_thm:convergence_in_distribution} is similar to the proof of \cite[Theorem 1.2]{GOT22} in the case of $t \geq 4$. The methods of Griffin--Ono--Tsai do not apply to the cases $t = 2, 3$ since the moment generating functions of the corresponding Gamma distributions do not exist, but replacing moment generating functions with characteristic functions allows us to extend the result to all $t \geq 2$. Recall the notation $k(t) = \frac{t-1}{2}$ and $\theta(t) = \sqrt{\frac{2}{t-1}}$ from Section \ref{sec:intro}.

\begin{proof}[Proof of Theorem \ref{intro_thm:convergence_in_distribution}]
The characteristic function of the Gamma distribution $X_{k, \theta}$ is given by $$\varphi(X_{k, \theta}, r) =\left(\frac{1}{1 - ir\theta}\right)^k,$$ where $X_{k, \theta}$ has mean $\mu_{k, \theta} = k \theta$, mode $\text{mo}_{k, \theta} = (k - 1)\theta$ and variance $\sigma_{k, \theta}^2 = k \theta^2$. For $a, b \in \mathbb{R}$, the shifted Gamma distribution $a X_{k, \theta} + b$ has characteristic function $$\varphi(a X_{k, \theta} + b, r) = \frac{e^{br}}{(1 - i \theta a r)^k},$$ with mean $a k \theta + b$, mode $a(k - 1)\theta + b$ and variance $a^2 k \theta^2$. We proceed by comparing $\hat{Y}_t(n)$ with $a X_{k, \theta} + b$, where $k = \frac{t - 1}{2}, \theta = \sqrt{\frac{2}{t - 1}}, a = -1$, and $b = \sqrt{2(t - 1)}/2.$ 

By (\ref{eqn:probability}), the characteristic functions of $\hat{Y}_t(n)$ are given by $$\varphi(\hat{Y}_t(n), r) = \frac{1}{p(n)} \sum_{m = 0}^\infty p_t(m; n) e^{\frac{i(m - \mu_t(n))r}{\sigma_t(n)}}$$ for each $n \geq 1$. Due to L\'evy's continuity theorem and the above remarks, it suffices to show $$\lim_{n\to \infty} \varphi(\hat{Y}_t(n); r) = \frac{e^{br}}{(1 - i \theta a r)^k}$$ for any $r \in \R$. We first evaluate $P_t(n; x)$ at $x = e^{\frac{ir}{\sigma_t(n)}}$ to obtain $$\varphi(\hat{Y}_t(n), r) = \frac{P_t(n; e^{\frac{ir}{\sigma_t(n)}})}{p(n)}e^{-i\frac{\mu_t(n)}{\sigma_t(n)}r}.$$ By Proposition \ref{prop:formula_P(n, x)} with $\alpha = i\pi t r / \sqrt{3(t - 1)}$ and $x_n = e^{\frac{\alpha}{\sqrt{n}}}$, we find that

\begin{align*}
\varphi(\hat{Y}_t(n), r) & = \frac{(2^{\frac74}3^{\frac14}n)^{-1}\sqrt{\frac{1}{\sqrt{6}} + \frac{ir}{\sqrt{3(t - 1)}}} \cdot (1 + \sqrt{\frac{2}{t - 1}}ir)^{-\frac{t}{2}} \cdot (1 + O_r(n^{-\frac17}))}{(4 \sqrt{3}n)^{-1} \cdot (1 + O(n^{-\frac17}))} \cdot e^{\frac{n}{t \sigma_t(n)}ir - \frac{\mu_t(n)}{\sigma_t(n)}ir} \\
& = \frac{e^{\frac{\sqrt{2(t - 1)}}{2}ir}}{(1 + \sqrt{\frac{2}{t - 1}}ir)^{\frac{t - 1}{2}}} \cdot (1 + O_r(n^{-\frac17})).
\end{align*}

Taking the limit as $n \to \infty$, L\'evy's continuity theorem gives us \[\hat{Y}_t(n) \sim \sigma_t(n) (a X_{k, \theta} + b) + \mu_t(n).\] We also obtain claim (2) since the random variable $X_{k, \theta}$ has cumulative distribution function $F_{k, \theta}(x) = \gamma(k, \frac{x}{\theta})/\Gamma(k)$ \cite[II.2]{Fel71}, where $\gamma(s, t)$ is the lower incomplete Gamma function.
\end{proof}

\section{Conclusion}\label{sec:conj}

Theorems \ref{intro_thm:support_t2}(c) and \ref{intro_thm:support_t3}(c) describe respectively how the many vanishing terms in the distributions of $\hat{Y}_t(n)$ prevent the convergence of the scaled mass functions $f_{t;n}$ for $t = 2,3$ to $g_t(x)$, for various notions of convergence. In the case of $t \geq 4$, however, none of the coefficients of $P_t(n;x)$ vanish \cite{granville_ono_1996}, even in the limit of large $n$. While the work of \cite{GOT22} establishes convergence of the cumulative distribution functions of $\{\hat{Y}_t(n)\}$ to a shifted Gamma distribution for $t \geq 4,$ the corresponding probability mass functions have yet to be studied. Thus, comparison with Theorems \ref{intro_thm:support_t2}(c), \ref{intro_thm:support_t3}(c) leads to the natural question of whether the functions $f_{t;n}(x)$ converge to $g_t$ for $t \geq 4$. Indeed, there is experimental evidence in support of the positive -- see, for instance, Figure \ref{fig:conjecturet11} for $t = 11$.

\begin{figure}[H]
    \centering
    \includegraphics[width=4in]{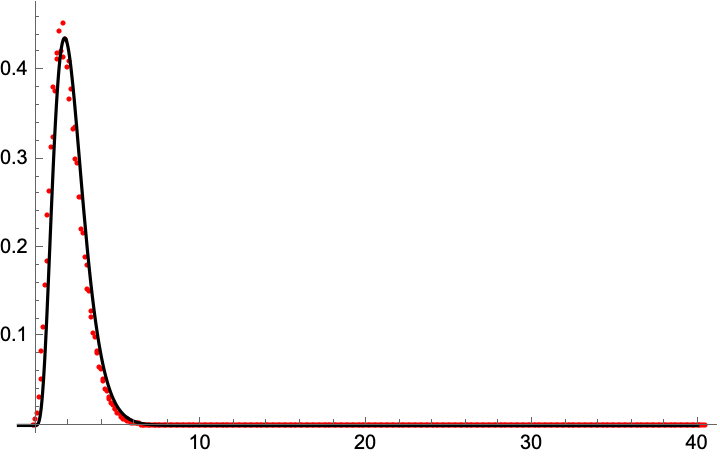}
    \caption{Plot of the scaled mass function $f_{11;5000}(x)$ (in red) with the probability density function $g_{11}(x)$.}
    \label{fig:conjecturet11}
\end{figure}

\begin{conj} For $t \geq 4$, we have $f_{t;n}(x)\to g_t(x)$ as $n \to \infty$ for all $x > 0$.
\end{conj}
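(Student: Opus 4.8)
The plan is to establish the pointwise convergence $f_{t;n}(x) \to g_t(x)$ for $t \geq 4$ by the same strategy used in Lemmas \ref{lem:meinardus} and \ref{lem:meinardus_t3}, but now exploiting the fact that \emph{every} coefficient of $P_t(n,x)$ is nonzero \cite{granville_ono_1996}. First I would extract an analogue of Han's factorization of the generating function $G_t(x;q)$ from \eqref{generating_function}: writing $G_t(x;q) = \left(\prod_{m\geq 1}(1-(xq^t)^m)^{-t}\right)\cdot\prod_{m\geq 1}\frac{(1-q^{tm})^t}{1-q^m}$, the first factor is the generating function for $t$-colored partitions weighted by $x$, and the second is the generating function for $t$-core partitions. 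Thus $p_t(m,n) = b_t(m)\,c_t(n-tm)$ where $b_t(m)$ counts $t$-colored partitions of $m$ and $c_t(k)$ counts $t$-core partitions of $k$. The key structural input (which must be supplied, e.g.\ from the theory of $t$-cores) is that $c_t(k)$ is \emph{positive for all $k \geq 0$} when $t \geq 4$, so that no coefficient vanishes and the continuous approximation is never interrupted.

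Next I would apply Meinardus' theorem \cite[Theorem 6.2]{And84} to obtain the asymptotic $b_t(m) \sim C_t\, m^{-(t+2)/4}\exp\!\left(\pi\sqrt{2tm/3}\right)$ as $m \to \infty$, combined with the Hardy--Ramanujan formula for $p(n)$, exactly as in the proofs of Lemmas \ref{lem:meinardus} and \ref{lem:meinardus_t3}. Substituting $m = \frac{n}{t} - \frac{\sqrt{3(t-1)n}}{\pi t}x$ and carrying out the Taylor expansion of $\sqrt{n - \frac{\sqrt{3(t-1)n}}{\pi}x} - \sqrt{n}$ around the point $x$ (which converges since $m/n \to 1/t$), the exponential factor should contract to $e^{-x}$ up to the appropriate power, and the polynomial prefactors should combine to produce precisely $g_t(x) = \frac{1}{\Gamma(k(t))\theta(t)^{k(t)}}x^{k(t)-1}e^{-x/\theta(t)}$ with $k(t)=(t-1)/2$, $\theta(t)=\sqrt{2/(t-1)}$. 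The bookkeeping here is the same kind of computation already performed for $h_{3;n}(x) \to \frac{3\sqrt3}{2\pi}g_3(x)$; the difference for $t \geq 4$ is that the prefactor powers of $m$ (now $m^{-(t+2)/4}$ rather than $m^{-5/4}$ or $m^{-3/2}$) produce a genuine power of $x$, namely $x^{(t-3)/2}=x^{k(t)-1}$, rather than a blow-up.

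The main obstacle I anticipate is \emph{uniformity}: the error term $\widetilde\beta_t(m) = O_\varepsilon(m^{-1/4+\varepsilon})$ from Meinardus, together with the Taylor remainder in the exponent, must be controlled uniformly as $m$ ranges over the (now full) set of admissible values $\{0,1,\dots,\lfloor n/t\rfloor\}$ near the scaled location of a fixed $x > 0$. Since for fixed $x$ we have $m \asymp n/t \to \infty$, these errors do vanish, but one must check that the second-order term $\tfrac{1}{8}\cdot\frac{3(t-1)}{\pi^2}\cdot\frac{x^2}{\sqrt n}\cdot\pi\sqrt{2/3}\cdot n^{-1/2}\cdot(\dots)$ in the exponent expansion is genuinely $o(1)$ and not merely $O(1)$ — this is where the $t=2,3$ argument failed to give convergence of $f_{t;n}$ itself (the prefactor blew up), so care is needed to confirm that for $t \geq 4$ the prefactor $m^{-(t+2)/4}$ exactly cancels the growth coming from the Jacobian of the change of variables and the leading $n^{3/2}$-type factor. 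A secondary, more serious gap is that this argument as sketched only handles the ``diagonal'' contribution and implicitly assumes $c_t(n-tm)$ is bounded or slowly varying; in fact $c_t(k)$ fluctuates, so the honest statement one can prove this way is convergence \emph{in distribution} (already done in Theorem \ref{intro_thm:convergence_in_distribution}) together with pointwise convergence of the \emph{smoothed} mass function, and upgrading to genuine pointwise convergence of $f_{t;n}(x)$ requires either an asymptotic for $c_t(k)$ on average near $k \asymp (1-1/t)n$ or an equidistribution input — which is precisely why this remains a conjecture rather than a theorem.
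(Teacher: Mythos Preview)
The paper does not prove this statement: it is explicitly labeled a \emph{Conjecture} in Section~\ref{sec:conj}, supported only by numerical evidence (Figure~\ref{fig:conjecturet11}) and left open. There is therefore no proof in the paper to compare your proposal against.

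Your sketch correctly identifies the natural line of attack---the factorization $p_t(m,n)=b_t(m)\,c_t(n-tm)$ with $c_t$ the $t$-core counting function, Meinardus' asymptotic for $b_t(m)$, and the change of variables already used in Theorems~\ref{intro_thm:pmf_no_converge} and~\ref{intro_thm:pmf_converge_t3}---and you also correctly isolate the genuine obstruction. The issue is not the uniformity of the Meinardus error or the Taylor remainder (both are harmless for fixed $x>0$ since $m\asymp n/t$); it is that $c_t(n-tm)$ enters multiplicatively and fluctuates. For $t=2$ the factor is $\{0,1\}$-valued, for $t=3$ it is the divisor-type sum $c(k)$, and for $t\geq 4$ the $t$-core count $c_t(k)$ is always positive \cite{granville_ono_1996} but is not asymptotically constant: it grows on the order of $k^{(t-2)/2}$ on average, with lower-order arithmetic oscillations. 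Your computation with $b_t(m)$ alone would produce a function $h_{t;n}(x)$ analogous to $h_{2;n}$ and $h_{3;n}$; to recover $g_t(x)$ on the nose you would additionally need $c_t(n-tm)\cdot h_{t;n}(x)\to g_t(x)$, i.e.\ a pointwise asymptotic for $c_t$ at the specific argument $n-tm\asymp \frac{\sqrt{3(t-1)n}}{\pi}x$, not merely an average. That is exactly the missing ingredient you name in your final paragraph, and it is why the authors left the statement as a conjecture.
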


\nocite{*}
\bibliography{hooklengthdistributionst23}
\bibliographystyle{plain}

\end{document}